\newtheorem{thm}{Theorem}[section]
\newtheorem{dfn}[thm]{Definition}
\newtheorem{lemma}[thm]{Lemma}
\newtheorem{prop}[thm]{Proposition}
\newtheorem{cor}[thm]{Corollary}
\newtheorem{que}{Question}
\newtheorem{conj}[que]{Conjecture}
\newtheorem{THM}{Theorem}
\theoremstyle{remark}
\newtheorem{ex}{Example}
\newtheorem{rem}[thm]{Remark}
\newcommand{\mb}{\mathbb}
\newcommand{\mc}{\mathcal}
\newcommand{\mf}{\mathfrak}
\newcommand{\R}{\mb R}
\newcommand{\C}{\mb C}
\newcommand{\Hj}{\mb H}
\newcommand{\Pj}{\mb P}
\newcommand{\Z}{\mb Z}
\newcommand{\Q}{\mb Q}
\newcommand{\D}{\mb D}
\newcommand{\inv}{^{-1}}
\newcommand{\F}{\mc F}
\newcommand{\G}{\mc G}
\newcommand{\rato}{\dashrightarrow}
\DeclareMathOperator{\SL}{SL}
\DeclareMathOperator{\PSL}{PSL}
\DeclareMathOperator{\Gal}{Gal}
\DeclareMathOperator{\im}{Im}
\DeclareMathOperator{\codim}{codim}
\DeclareMathOperator{\Aut}{Aut}
\DeclareMathOperator{\Bir}{Bir}
\DeclareMathOperator{\Psaut}{PsAut}
\DeclareMathOperator{\Sing}{Sing}
\DeclareMathOperator{\Aff}{Aff}
\DeclareMathOperator{\Ric}{Ric}
\DeclareMathOperator{\dev}{dev}
\DeclareMathOperator{\Exc}{Exc}
\DeclareMathOperator{\Bs}{Bs}
\DeclareMathOperator{\Id}{Id}
\numberwithin{equation}{section}
\begin{document}
\title{Symmetries of transversely projective foliations}
\author{F. Lo Bianco, E. Rousseau, F. Touzet}
$$ $$
\maketitle

\begin{abstract}
Given a (singular, codimension $1$) holomorphic foliation $\F$ on a complex projective manifold $X$, we study the group $\Psaut(X,\F)$ of pseudo-automorphisms of $X$ which preserve $\F$; more precisely, we seek sufficient conditions for a finite index subgroup of $\Psaut(X,\F)$ to fix all leaves of $\F$. It turns out that if $\F$ admits a (possibly degenerate) transverse hyperbolic structure, then the property is satisfied; furthermore, in this setting we prove that all entire curves are algebraically degenerate. We prove the same result in the more general setting of transversely projective foliations, under the additional assumptions of non-negative Kodaira dimension and that for no generically finite morphism $f\colon X'\to X$ the foliation $f^*\F$ is defined by a closed rational $1$-form.
\end{abstract}

\section{Introduction}

In this article we study the symmetries of holomorphic foliations, i.e. automorphisms (or birational transformations) of the ambient manifold which send each leaf to another leaf; we denote by $\Aut(X,\F)$ the group of such automorphisms. In particular, we focus on the following question:

\begin{que}
\label{main question}
Under which conditions does a finite index subgroup of $\Aut(X,\F)$ preserve each leaf of $\F$?
\end{que}

If the above condition is satisfied, we will say that the \emph{transverse action} of $\Aut(X,\F)$ (on $\F$) is finite.

\begin{ex}
Let $\F$ be a linear foliation on a compact complex torus $X=\C^n/\Lambda$. Then the group $\Aut(X,\F)$ contains the group of translations of $X$, and in particular its transverse action is infinite.
\end{ex}

\begin{ex}
Since the group of automorphisms of a projective variety of general type $X$ is finite, so is the transverse action of $\Aut(X,\F)$ for any foliation $\F$ on $X$. \\
By taking the pull-back foliation on a product $X\times Y$ ($Y$ being for example a compact torus) one obtains a foliation with an infinite group of symmetries which has finite transverse action.
\end{ex}

\subsection{Main results}

From now on we suppose that $X$ is a complex projective manifold and that $\F$ is a (possibly singular) foliation of codimension $1$. Recall that a birational transformation $f\colon X\rato X$ is called a \emph{pseudo-automorphism} if $f$ induces an isomorphism $U\cong V$ between two Zariski-open sets such that $\codim(X\setminus U),\codim(X\setminus V)\geq 2$; or, equivalently, if $f$ and $f\inv$ do not contract any hypersurface.

We say that $\F$ admits a \emph{transverse hyperbolic structure} if, roughly speaking, outside a degeneracy divisor $H\subset X$ the foliation admits local first integrals $F_i\colon U_i \to \mb D$ which are uniquely defined up to left composition with automorphisms of $\mb D$; see Definition \ref{def hyperbolic}.\\
The third-named author showed in \cite{MR3124741}  that $\F$ admits a transverse hyperbolic structure if the conormal bundle $N^*_\F$ is pseudo-effective and the positive part of its Zariski decomposition is non-trivial; see Remark \ref{hyp->pseff}.\\
We denote by $\Psaut(X,\F)$ the group of pseudo-automorphisms of $X$ which preserve $\F$.

In this context, we prove that the foliation is essentially the pull-back of a foliation on a projective variety of general type, which implies the transverse finiteness of the action of $\Psaut(X,\F)$; furthermore, we obtain a result on entire curves on $X$:

\begin{THM}
\label{thm transv hyp}
Let $X$ be a projective manifold and let $\F$ be a transversely hyperbolic codimension $1$ foliation. Then
\begin{itemize}
\item there exists a generically finite morphism $\pi\colon X'\to X$, a morphism $\psi\colon X'\to B$ onto a projective variety $B$ of general type and a foliation $\G$ on $B$ such that $\pi^*\F=\psi^*\G$;
\item the transverse action of $\Psaut(X,\F)$ is finite;
\item any entire curve $f: \C \to X$ is algebraically degenerate i.e. $f(\C)$ is not Zariski dense.
\end{itemize}
\end{THM}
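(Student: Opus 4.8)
The plan is to exploit the canonical transverse hyperbolic structure on $\F$ to produce a developing map and a holonomy representation, and then to recognise $\F$ as pulled back from a locally symmetric variety. First I would fix a resolution of the degeneracy divisor $H$ and work on $U=X\setminus H$, where the local first integrals $F_i\colon U_i\to\D$ of Definition~\ref{def hyperbolic} are defined up to post-composition by $\Aut(\D)=\PSL(2,\R)$. Analytically continuing the $F_i$ along paths yields a developing map $\dev\colon\widetilde U\to\Hj$ on the universal cover together with a holonomy representation $\hol\colon\pi_1(U)\to\PSL(2,\R)$, whose conjugacy class is intrinsic to $\F$ precisely because the hyperbolic structure is. The leaves of $\F$ are the fibres of $\dev$, and the $\PSL(2,\R)$-invariant metric of curvature $-1$ on $\Hj$ pulls back to a well-defined transverse semi-positive metric $g$ on $U$ whose kernel is $T\F$. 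Everything then reduces to understanding the image $\Gamma=\overline{\hol(\pi_1(U))}$.

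For the first bullet I would invoke the structure theory of such representations, in the spirit of the third-named author's work \cite{MR3124741} and of Corlette--Simpson-type factorisation theorems, here in the quasi-projective (parabolic) setting forced by $H$. The elementary case (a fixed point on $\partial\Hj$ or an invariant geodesic) forces $\F$ to be defined by a closed rational $1$-form and is set aside; in the non-elementary case one shows that $\hol$ is, up to finite index, discrete, so that $\dev$ descends to a period map $U\to\Hj^{\,m}/\Gamma$ onto a finite-volume locally symmetric variety: a hyperbolic orbifold curve when the target is a single factor, or a polydisk (Hilbert-modular type) quotient for an irreducible lattice. Passing to a generically finite cover $\pi\colon X'\to X$ that renders this period map single-valued, resolving, and compactifying the target to a projective variety $B$, one obtains $\pi^*\F=\psi^*\G$, where $\psi\colon X'\to B$ is the period map and $\G$ the tautological codimension-$1$ foliation; the crucial point is that these (log-)canonical models are of general type. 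I expect this to be the main obstacle: establishing discreteness of the holonomy and controlling the developing map across $H$, i.e. the orbifold/parabolic data at infinity and the general-type compactification of the locally symmetric target.

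For the second bullet, let $g\in\Psaut(X,\F)$. Since the transverse hyperbolic structure is canonically attached to $\F$, the pseudo-automorphism $g$ preserves it, hence preserves, up to conjugacy, the holonomy data and the period map. Arranging $\pi$ to be Galois, $g$ lifts to a pseudo-automorphism $g'$ of $X'$; because $g'$ contracts no hypersurface and $B$ is of general type, $g'$ descends to a biregular automorphism $g_B$ of $(B,\G)$. As $\Aut(B)$ is finite for $B$ of general type, the homomorphism $g\mapsto g_B$ has finite image, so a finite-index subgroup of $\Psaut(X,\F)$ induces the identity on $B$, hence preserves every fibre of $\psi$ and therefore every leaf of $\F$; this is exactly transverse finiteness.

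Finally, for the entire curves, let $f\colon\C\to X$ be entire. The pulled-back transverse metric $f^*g$ is a semi-positive pseudo-metric on $\C$ of curvature $\le -1$ where it is non-zero, so the Ahlfors--Schwarz lemma forces $f^*g\equiv 0$, i.e. $f$ is tangent to $\F$. Lifting $f$ through the \'etale part of $\pi$ to $\tilde f\colon\C\to X'$ and composing with $\psi$ produces $\psi\circ\tilde f\colon\C\to B$ tangent to $\G$; lifting further to the universal cover $\Hj^{\,m}\subset\D^{m}$ of the locally symmetric part, each coordinate is a bounded holomorphic function on $\C$, hence constant by Liouville. Thus $\psi\circ\tilde f$ is constant, so $f(\C)$ lies in a single fibre of $\psi$, a proper algebraic subvariety, which gives algebraic degeneracy. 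The delicate point, once more, is the degeneracy divisor $H$: one must guarantee that the lift avoids, or is suitably controlled along, $H$ and the compactification boundary so that the Liouville argument indeed applies.
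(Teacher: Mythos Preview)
Your overall architecture matches the paper's: develop and holonomy, a structure/factorisation dichotomy taking $\F$ either to a hyperbolic curve or to a polydisk Shimura quotient, general type of the target to kill the transverse action, and Ahlfors--Schwarz plus hyperbolicity of the leaves for entire curves. The paper organises exactly this via \cite{MR3644247} for the dichotomy, \cite{rousseautouzet15} for the equivariance of the Stein factorisation and for hyperbolicity of the tautological leaves, and \cite{MR3859271,MR3449168} for the general type of the image; your Liouville argument for the last bullet is the same as the paper's, phrased differently.

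There is, however, a genuine gap in how you dispose of the ``elementary'' case. You write that elementary monodromy forces $\F$ to be defined by a closed rational $1$-form and then set it aside. But Theorem~\ref{thm transv hyp} has no such exception: all three conclusions must hold for \emph{every} transversely hyperbolic $\F$, and being defined by a closed rational $1$-form does not by itself yield a map to a variety of general type, finiteness of the transverse action, or degeneracy of entire curves (that alternative belongs to Theorem~\ref{thm transv proj}, not here). The paper avoids this by invoking the transversely hyperbolic structure theory of \cite{MR3644247} directly: on a projective manifold the dichotomy is exactly ``algebraically integrable (monodromy a cocompact lattice in $\PSL_2(\R)$)'' versus ``pull-back from a polydisk Shimura orbifold'', so an elementary branch never arises; the affine/parabolic possibility you have in mind is precisely what the paper rules out in the K\"ahler setting (see the Inoue surface discussion). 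If you want to argue via the raw Corlette--Simpson trichotomy instead, you must actually prove that the virtually abelian case is absorbed into the algebraically integrable one for transversely hyperbolic $\F$, not just set it aside.

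A related imprecision: you assert that in the non-elementary case the holonomy is, up to finite index, discrete in $\PSL_2(\R)$. That is only true in the curve (algebraically integrable) case. In the Shimura case the image of $\rho$ in $\PSL_2(\R)$ is typically \emph{not} discrete; what is discrete is the image in the product $\prod_\sigma \PSL_2(\R)$ under all archimedean embeddings, and it is this that produces the map to $\mathbb D^N/\Gamma$. Your period-map heuristic is right in spirit, but the mechanism is arithmetic, not discreteness of a single factor. Finally, for equivariance you should say why $\psi$ is actually preserved by lifts of $g$; ``contracts no hypersurface'' is not the reason. The paper shows fibres go to fibres because their images in $\mathbb D^N/\Gamma$ would otherwise be $\G_i$-invariant positive-dimensional subvarieties, which do not exist by \cite{rousseautouzet15}; your alternative ``the period map is canonical'' is fine once you make explicit that it is determined by the transverse structure up to an automorphism of the target.
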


For a proof, see Theorem \ref{hyperbolic transverse action} and Theorem \ref{degen}. This result should be seen as a generalization of well-known properties of hyperbolic curves. It is also important to remark that such a statement is wrong in the non-K\"ahler setting as we will see in the striking example of Inoue surfaces.

Transversely hyperbolic foliations are a special case of \emph{transversely projective} foliations: in this case, roughly speaking, the distinguished first integrals have values in $\Pj^1$ and they are uniquely defined up to left composition with automorphisms of $\Pj^1$ (see Definition \ref{def projective}). In this context the description is less precise, and we are forced to introduce a dichotomy:

\begin{THM}
\label{thm transv proj}
Let $X$ be a projective  manifold with $\kappa(X)\geq 0$ and $\F$ be a transversely projective (possibly singular) foliation of codimension $1$ on $X$. 
Then 
\begin{itemize}
\item either there exists a generically finite  morphism $\pi \colon X'\to X$ such that $\pi^* \F$ is defined by a closed rational $1$-form;
\item or the transverse action of $\Psaut(X,\F)$ is finite.
\end{itemize}
\end{THM}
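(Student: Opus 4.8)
The plan is to translate the dichotomy of the theorem into a dichotomy on the monodromy of the transverse projective structure. Away from the degeneracy divisor $D\subset X$, a transverse projective structure on $\F$ (Definition \ref{def projective}) is encoded by a developing map together with a monodromy representation $\rho\colon \pi_1(X\setminus D)\to \PSL_2(\C)$, with image $\Gamma$. The first step is to dispose of the \emph{elementary} case, in which $\Gamma$ has a finite orbit on $\Pj^1$: if $\Gamma$ fixes a point, preserves a pair of points, or is finite, then the structure is respectively transversely affine, transversely Euclidean, or (virtually) trivial, and in each case a generically finite pullback $\pi^*\F$ is defined by a closed rational $1$-form, namely an additive or multiplicative primitive of the developing map. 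This is precisely the first alternative of the theorem, and it is exactly here that passing to a generically finite cover is needed, in order to trivialize the finite part of the monodromy and to turn a closed logarithmic form into a genuine closed rational $1$-form.

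It then remains to treat the \emph{non-elementary} case, where $\Gamma$ has no finite orbit on $\Pj^1$; here $\Gamma$ is either conjugate into $\PSL_2(\R)$ (the transversely hyperbolic situation of Theorem \ref{thm transv hyp}) or not. The key structural input is \emph{rigidity}: a foliation with non-elementary monodromy carries a \emph{unique} transverse projective structure. Indeed, the transverse projective structures on a fixed $\F$ form an affine space directed by the transverse Schwarzian deformations, which are meromorphic $1$-forms adapted to $\F$; such a deformation amounts to a $\Gamma$-invariant quadratic differential on $\Pj^1$, whose zero divisor would be a $\Gamma$-invariant set of at most four points, contradicting non-elementarity. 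Granting uniqueness, every $g\in\Psaut(X,\F)$ must carry the structure to itself, hence acts on the transverse model $\Pj^1$ by a single M\"obius transformation $c(g)$, producing a homomorphism
$$ c\colon \Psaut(X,\F)\longrightarrow \PSL_2(\C), $$
whose image normalizes $\Gamma$ (from the intertwining relation $c(g)\,\rho(\gamma)\,c(g)^{-1}=\rho(g_*\gamma)$) and whose kernel is exactly the group of pseudo-automorphisms fixing every leaf. Thus the transverse action is finite if and only if $\im(c)$ is finite, and this is the statement I would aim to establish.

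The finiteness of $\im(c)$ is the main obstacle, and it is where the hypothesis $\kappa(X)\geq 0$ must intervene: normalizing $\Gamma$ is by itself insufficient, since the normalizer of a Zariski-dense subgroup of $\PSL_2(\C)$ can be large when $\Gamma$ is not discrete. I would argue by contradiction, splitting the infinite image into its connected and discrete contributions. For the connected part, I would use that $\kappa(X)\geq 0$ forces the identity component of $\Psaut(X,\F)$ to act through a complex torus (Lieberman--Fujiki-type structure); its image under $c$ is then a connected commutative subgroup of $\PSL_2(\C)$ normalizing $\Gamma$, hence preserving a finite subset of $\Pj^1$, which contradicts non-elementarity unless it is trivial. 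This reduces matters to an infinite discrete image. In the subcase where $\Gamma$ is conjugate into $\PSL_2(\R)$ I would invoke Theorem \ref{thm transv hyp}: there $\F$ is the pullback under a morphism to a base $B$ of general type, an infinite transverse action descends to an infinite subgroup of $\Aut(B)$, and this contradicts the finiteness of the automorphism group of a variety of general type. The genuinely complex subcase is what I expect to be the technical heart: one must rule out an infinite discrete image directly, and I would approach this by combining the $\Psaut(X,\F)$-invariance of the Iitaka fibration (available because $\kappa(X)\geq 0$) with a positivity argument showing that a nontrivially moving transverse $\PSL_2(\C)$-symmetry would either descend to a general-type base through an invariant fibration or produce negativity incompatible with $\kappa(X)\geq 0$.
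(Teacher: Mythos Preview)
Your approach differs substantially from the paper's, and the difference is not merely cosmetic: the paper does \emph{not} argue via an elementary/non-elementary dichotomy on the monodromy followed by a direct normalizer argument. Instead it invokes the structure theorem of Loray--Pereira--Touzet (Theorem~D of \cite{MR3522824}, itself built on Corlette--Simpson \cite{MR2457528}): either some generically finite pull-back of $\F$ is given by a closed rational $1$-form, or $\F$ is pulled back from a Riccati foliation on a ruled surface over a curve, or the monodromy factors through a polydisk Shimura modular orbifold with regular singularities. The curve case is then handled via the Cantat--Favre classification \cite{MR1998612}, and the Shimura case via Brunebarbe's log-general type result \cite{brunebarbe2016strong} together with \cite{lobiancopadic} on the fibres of the Shafarevich morphism (this last step is precisely where $\kappa(X)\ge 0$ is used). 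Your plan bypasses this structure theorem, and that is where the real gap lies.

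Concretely, your ``genuinely complex subcase'' is not a proof but a hope: you propose to combine invariance of the Iitaka fibration with an unspecified positivity argument, but you give no mechanism linking an infinite discrete image in $\PSL_2(\C)$ to either a general-type base or a contradiction with $\kappa(X)\ge 0$. The normalizer of a Zariski-dense non-arithmetic subgroup of $\PSL_2(\C)$ can perfectly well be infinite without any geometric consequence of the kind you need; the paper's route through Shimura orbifolds is exactly what supplies the missing geometry. There are also two secondary gaps. First, your elementary case is incomplete: affine or abelian monodromy does not by itself yield a closed rational $1$-form on $X$, because the locally defined forms $dF$ or $dF/F$ need not extend meromorphically across the polar divisor when the projective structure has irregular singularities (compare Lemma~\ref{abelian monodromy}, which explicitly assumes at worst logarithmic poles). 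Second, monodromy conjugate into $\PSL_2(\R)$ does \emph{not} mean the foliation is transversely hyperbolic in the sense of Definition~\ref{def hyperbolic}: that notion requires in addition that the developing map lands in $\mb D$ and that the curvature current has the prescribed form, so you cannot directly invoke Theorem~\ref{thm transv hyp} in that subcase.
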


Remark that the first alternative contains the case of algebraically integrable foliations.

The proofs of the results follow the same overall strategy, although in the general case of transversely projective foliations one needs to address some additional technical difficulties:
\begin{itemize}
\item we apply a result of Corlette-Simpson \cite{MR2457528} which allows to factor (see Definition \ref{def factor}) the monodromy of the structure either through a curve or through a quotient of the polydisk $\mb D^N/\Gamma$ (the transverse hyperbolic and transverse projective cases are treated in detail in \cite{MR3644247} and  \cite{MR3522824} respectively);
\item the case of curves can be treated almost by hand (in the case of a projective structure, we use a classification result of Cantat and Favre \cite{MR1998612});
\item for the case of quotients of the polydisk, we apply a result of Brunebarbe \cite{brunebarbe2016strong}, which ensures that the image of the morphism $\psi\colon X\rato \mb D^N/\Gamma$ is of (log-)general type, and in particular its group of pseudo-automorphisms is finite;
\item one shows that $\psi$ is essentially equal to the Shafarevich morphism of the monodromy representation, hence it is invariant by $\Psaut(X,\F)$; then one can restrict to fibres (in the transverse projective case, one needs to apply \cite{lobiancopadic}, hence the assumption on the Kodaira dimension).
\end{itemize}

\subsection{A conjecture}

In the context of fibrations (i.e. algebraically integrable foliations), Question \ref{main question} was studied by the first-named author in \cite{lobiancopadic}. Theorem A in \emph{loc-cit.} suggests the following conjecture:

\begin{conj}
\label{main conjecture}
Let $X$ be a projective manifold such that $\kappa(X)\geq 0$, $\F$ be a foliation on $X$ and $L$ be a line bundle on $X$. Suppose that $L$ admits a singular hermitian metric whose curvature form defines, up to sign, a  transverse hermitian metric on $\F$.\\
Then a birational transformation of $X$ preserving $\F$ and $L$ has transversely finite action.
\end{conj}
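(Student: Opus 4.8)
The plan is to adapt the three-step strategy used for Theorems \ref{thm transv hyp} and \ref{thm transv proj} to the purely metric setting. Writing $\Theta$ for the curvature current of the singular hermitian metric on $L$, the hypothesis says that, up to sign and away from the polar/degeneracy locus $H\subset X$ where the metric degenerates, $\Theta$ restricts to a transverse hermitian metric $g$ on $\F$. First I would produce from $g$ a developing map $\dev\colon \widetilde{X\setminus H}\to M$ to the relevant transverse model $M$ (a hermitian Riemann surface), equivariant with respect to a monodromy representation $\rho\colon\pi_1(X\setminus H)\to\Isom(M)$. The crucial point, which is exactly where the extra hypothesis that $f$ preserve $L$ enters, is that any $f\in\Psaut(X,\F)$ with $f^*L\cong L$ preserves $\Theta$ as a current and hence the transverse metric $g$; consequently $f$ acts on the developing data and conjugates $\rho$ to itself, so that the whole construction is $\Psaut(X,\F)$-equivariant.

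Next I would split according to the geometry of $M$. When $g$ has constant curvature the monodromy lands in $\PSL(2,\R)$, in the Euclidean group, or in $\PSU(2)$, recovering the transversely hyperbolic, Euclidean and spherical foliations; in the reductive cases one applies the Corlette--Simpson factorization \cite{MR2457528} to factor $\rho$ either through a curve or through a quotient of a polydisk $\D^N/\Gamma$. The curve case is then handled essentially by hand, using the Cantat--Favre classification \cite{MR1998612} to control the action on the base. For the polydisk case, Brunebarbe's theorem \cite{brunebarbe2016strong} shows that the image of the associated map $\psi\colon X\rato \D^N/\Gamma$ is of (log-)general type, so its group of pseudo-automorphisms is finite; identifying $\psi$ with (a multiple of) the Shafarevich morphism $\Sh_\rho$ of the monodromy makes it $\Psaut(X,\F)$-invariant by the equivariance established above. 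It then remains to pass to the fibres of $\psi$, where I would invoke \cite{lobiancopadic}; this is precisely the step that forces the assumption $\kappa(X)\geq 0$.

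The main obstacle, and the reason this remains a conjecture, is the passage from an arbitrary transverse \emph{hermitian} metric to a situation rigid enough for the factorization machinery to apply. In the hyperbolic and projective settings the monodromy automatically lands in a semisimple (hence reductive) group, which is what Corlette--Simpson requires; a general transverse hermitian metric carries no such guarantee, and its curvature, being merely $\pm c_1(L)$, need not be constant. Two regimes look genuinely delicate. First, the flat (transversely Euclidean) case, where the developing map is governed by a closed holomorphic $1$-form and one expects to land, after a generically finite base change, in the first alternative of a statement like Theorem \ref{thm transv proj} rather than in the reductive factorization; disentangling this from the general case requires understanding when the transverse structure is secretly defined by a closed rational $1$-form. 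Second, and more fundamentally, one must show that a metric appearing only as the curvature of a singular metric on $L$ is holonomy-invariant in the strong sense needed to define $\rho$ at all, which involves controlling the singular hermitian metric along $H$ and ruling out pathological monodromy. I expect these two points --- the Euclidean degeneration and the regularity of the transverse metric along the degeneracy divisor --- to be where the real work lies; once they are settled, the equivariance argument and the reduction to a base of general type should run parallel to the proofs of Theorems \ref{thm transv hyp} and \ref{thm transv proj}.
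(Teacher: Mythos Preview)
The statement you are addressing is Conjecture~\ref{main conjecture}, which the paper explicitly leaves open; there is no proof in the paper to compare your proposal against. The only place the paper engages with the conjecture beyond stating it is the final remark of Section~\ref{sec tori}, where it is checked for codimension~$1$ foliations on abelian varieties by a direct and rather special argument: one shows that the hypothesis forces $\F$ to be the fibration onto an elliptic curve $E$ given by the normal reduction of $L$, and then finiteness follows because $\Aut(E,L_E)$ is finite for ample $L_E$. That argument has nothing to do with developing maps, Corlette--Simpson, or Shafarevich morphisms.

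Your outline is a reasonable heuristic for why one might hope the conjecture holds, and you correctly flag the two serious obstacles (no constant curvature, hence no reductive target for the monodromy; and regularity of the metric along $H$). There is, however, a gap earlier in your sketch that you pass over too quickly: you assert that $f^*L\cong L$ implies $f$ preserves the curvature current $\Theta$, and hence the transverse metric $g$. Preserving the \emph{line bundle} only tells you that $f^*\Theta$ and $\Theta$ are cohomologous; it does not force them to agree as currents, since the singular hermitian metric on $L$ is not assumed unique or canonical in any way. In the transversely hyperbolic case this issue evaporates because the structure itself is unique (Remark~\ref{hyp->pseff}), and in the transversely projective case non-uniqueness pushes you into the closed-rational-form alternative (Lemma~\ref{uniqueness projective structure}); in the general hermitian setting no such mechanism is available, so establishing $\Psaut(X,\F)$-equivariance of the developing data is part of the open problem, not a preliminary step you can take for granted.
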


Here, by a transverse hermitian metric we mean a (semi-)positive closed $(1,1)$-current, which is invariant by the holonomy of $\F$ and which induces a smooth hermitian metric on the normal bundle $N_\F$ in codimension 1 (indeed, outside the singular locus of $\F$); this is also Mok's definition of a semi-k\"ahler structure \cite[Definition 1.2.1]{AIF_2000__50_2_633_0}. The third-named author showed in \cite{MR3403731} that, if $\F$ has codimension $1$ and is regular, the existence of a closed positive and holonomy invariant current without atomic part   
implies the existence of a such a transverse hermitian metric. Moreover, the latter can be chosen to be \emph{homogeneous} (i.e. hyperbolic, euclidean or spherical, depending on the sign of the curvature tensor).

\begin{rem}
By the results proven in the forthcoming Section \ref{sec:hyperbolic}, it seems rather natural to state the same conjecture under more general  assumptions on the transverse metric inherited from the curvature current of $L$ (allowing for instance weaker regularity and additional  degenaracies along invariant hypersurfaces).
\end{rem}

\subsection{Structure of the text}

In Section \ref{sec:preliminaries} we present the formal definitions of transversely hyperbolic and projective structures, and give the interpretation of these definitions in terms of developing maps and monodromy; we also briefly recall some of the properties of Shimura modular orbifolds which will be used later, as well as the definition of factorization of a representation and a result of lifting of pseudo-automorphisms to finite \'etale covers. In Section \ref{sec:hyperbolic} and \ref{sec transv proj} we prove Theorem \ref{thm transv hyp} and \ref{thm transv proj} respectively; we also show that Theorem \ref{thm transv hyp} cannot be extended to the general (non-K\"ahler) compact case. Finally, in Section \ref{sec tori} we describe the symmetries of codimension $1$ foliations on compact complex tori; in particular, we show that Conjecture \ref{main conjecture} is (trivially) satisfied in this case.


\section{Preliminaries}
\label{sec:preliminaries}

\subsection{Transverse structures on codimension $1$ foliations}

Throughout this section, we denote by $X$ a complex (projective) manifold and by $\F$ a codimension $1$ (possibly singular) foliation. By a (smooth) transverse structure on $\F$ we mean, roughly speaking, a geometric structure (in a broad sense: metric, homogeneous structure...) defined on the normal bundle $N_\F$ which is invariant by the holonomy of $\F$.\\
Of course we need to specify the behavior at singular points of $\F$; furthermore, we will consider more generally singular transverse structure, which may degenerate (in a prescribed way) along an $\F$-invariant hypersurface $H$.

\subsubsection{Definitions}
Let us start with the formal definition of transverse hyperbolic structure, see \cite{MR3403731}. \\
\emph{Caution! We use a different notation than \cite{MR3403731}, where the metric and the associated curvature current are denoted by $\eta_T$ and $-T$ respectively.} 

\begin{dfn}
\label{def hyperbolic}
A \emph{(branched) transverse hyperbolic structure} on $\F$ is the datum of a non-trivial positive closed $(1,1)$-current $T$ such that:
\begin{itemize}
\item $T$ is invariant by the holonomy of $\F$ (or simply $\F$-invariant), meaning that, if $\omega$ is a local holomorphic $1$-form defining $\F$, we have $\omega \wedge T=0$;
\item $T$ induces a singular hermitian metric on $N_\F$ (in the sense of Demailly, see \cite{MR1178721});
\item if $\Theta_T$ denotes the curvature current associated to $T$, we have $\Theta_T=-(T + [N])$, where $[N]$ denotes the current of integration along a $\Q$-effective divisor $N$.
\end{itemize}
The hypersurface $H:=Supp(N)$ is the degeneracy locus of the transverse hyperbolic structure.
\end{dfn}

A closed non-trivial semi-positive current satisfying the first and second condition is called a \emph{ singular transverse metric} of $\F$. It can be then locally written as $T=ie^{2\psi}\omega\wedge\bar\omega$ where $\omega$ is a local closed one-form defining $\F$ and $\psi$ is $L_{loc}^1$. If $x\in X$ is a regular point of $\F$, we can describe the foliation by a local coordinate $dz=0$, so that locally
$$T=ie^{2\psi(z)}dz\wedge d\bar z,$$

The associated curvature current is then locally defined as 
$$\Theta_T=-\frac {i} {\pi} \partial \bar\partial \psi.$$

A transversely (branched) euclidean (respectively, spherical) structure is defined in an analogous way by imposing that $\Theta_T=-[N]$ (respectively, $\Theta_T=T-[N]$). 

\begin{rem}
\label{hyp->pseff}
If a foliation $\F$ admits a transverse hyperbolic structure, then $N^*_\F$ is pseudo-effective. Indeed, a positive, holonomy invariant current $T$ defining the hyperbolic structure defines a singular hermitian metric on $N_\F$; its curvature form, which is equal to $-T-[N]$, represents the class $c_1(N_\F)$. Therefore, the class $c_1(N^*_\F)=-c_1(N_\F)$ is represented by the positive current $T+[N]$, meaning that $N^*_\F$ is pseudo-effective.

Conversely, the third-named author showed in \cite[Theorem 1]{MR3124741} that, if $N^*_\F$ is pseudo-effective, then $\F$ admits
\begin{itemize}
\item either a transverse hyperbolic structure,
\item or a transverse euclidean structure.
\end{itemize}

Moreover $N$ can be chosen to coincide with the negative part of the Zariski decomposition of $c_1(N^*_\F)$ (see \cite{MR2050205}). In this situation, the first part of the alternative exactly occurs when the positive part is non-trivial and the structures are then unique.
\end{rem}

Instead of considering local first integral with values in $\mb D$, one can pick more generally first integrals with values in $\Pj^1$, well-defined up to automorphisms of $\Pj^1$. In order to define a projective structure (see \cite{MR2337401, MR3522824}), one imposes the following conditions on the singular locus (i.e. the hypersurface where the structure degenerates):

\begin{dfn}
\label{def projective}
A \emph{transverse projective structure} on $\F$ is the data of a triple $(E, \nabla, \sigma)$ where
\begin{itemize}
\item $E$ is a rank $2$ vector bundle;
\item $\nabla$ is a flat meromorphic connection on $E$;
\item $\sigma \colon X \rato E$ is a meromorphic section of $\Pj(E)\rato X$ such that, if $\mc R$ denotes the Riccati foliation on $\Pj(E)$ determined by (the projectivization of) $\nabla$, $\F=\sigma^*\mc R$.
\end{itemize}
Such triples are considered modulo a natural relation of birational equivalence (see \cite{MR3522824}).
\end{dfn}

As explained in \cite[\textsection 6.1]{MR3644247}, transversely hyperbolic foliations are a special case of transversely projective foliations.

\subsubsection{Distinguished first integrals and monodromy representation}
Let $\F$ be a transversely hyperbolic foliation on a manifold $X$; denote by $H\subset X$ the polar hypersurface of $T$. Remark that, locally at points of $X_0:=X\setminus H$, $\F$ can be defined by a local first integral
$$F_i\colon U_i\to \mb D$$
which are uniquely defined modulo composition to the left by elements of $Isom(\mb D)=\Aut(\mb D)=\PSL_2(\R)$.\\
Following such distinguished first integrals along closed paths yields a developing map
$$\dev \colon \widetilde X_0 \to \mb D,$$
where $\widetilde X_0$ denotes the universal cover of $X_0$, and a monodromy representation
$$\rho\colon \pi_1(X_0)\to \PSL_2(\R)$$
such that
$$\rho(\gamma) \circ\dev=\dev \circ \gamma \qquad \text{for all }\gamma\in \pi_1(X_0).$$
Here, we identify $\pi_1(X_0)$ with the group of deck transformations of the universal cover $\widetilde X_0 \to X_0$.

Similarly, if $\F$ is a transversely projective foliation and $H \subset X$ denotes the polar hypersurface of the connection $\nabla$, locally at points of $X_0:=X\setminus H$ the foliation $\F$ admits distinguished (meromorphic) first integrals
$$F_i\colon U_i \rato \Pj^1$$
which are uniquely defined modulo composition to the left by elements of $\Aut(\Pj^1)=\PSL_2(\C)$.\\
Following such distinguished first integrals along closed paths yields a (meromorphic) developing map
$$\dev \colon \widetilde X_0 \rato \mb \Pj^1,$$
where $\widetilde X_0$ denotes the universal cover of $X_0$, and a monodromy representation
$$\rho\colon \pi_1(X_0)\to \PSL_2(\C)$$
such that
$$\rho(\gamma) \circ\dev=\dev \circ \gamma \qquad \text{for all }\gamma\in \pi_1(X_0).$$

\subsubsection{Singularities of transverse projective structures}
Let us conclude the introduction to transverse projective (or hyperbolic) structures by a brief discussion on the singular locus $H$ introduced above.

\begin{dfn}
We say that a transversely projective foliation has regular singularities if the corresponding connection has at worst regular singularities in the sense of \cite{MR0417174}.
\end{dfn}

As remarked in \cite[\textsection 6.1]{MR3644247}, transversely hyperbolic foliations have regular singularities when considered as transversely projective foliations.

For the purposes of this article, one can simply keep in mind the following property: if a transversely projective foliation has regular singularities and the monodromy (of a small loop) around an irreducible hypersurface $D\subset H$ is trivial, then a distinguished first integral defined in a neighborhood of $D$ extends meromorphically through $D$.

\subsection{Shimura modular orbifolds}

Recall that an \emph{orbifold} is Hausdorff topological space which is locally modelled on finite quotients of $\C^n$. One defines an orbifold cover as a map $f\colon X\to Y$ between orbifolds which is locally conjugated to a quotient map
$$\C^n/\Gamma_0 \to \C^n/\Gamma_1 \qquad \Gamma_0 \leq \Gamma_1.$$
Then one can see that given an orbifold $X$ there exists a universal orbifold cover $\pi\colon \widetilde X \to X$; the \emph{orbifold fundamental group} $\pi_1^{orb}(X)$ is then defined as the group of deck transformations of $\pi$.\\
For example, if $U$ is a simply connected complex manifold and $G\leq \Aut(U)$ is a discrete subgroup such that the stabilizer of each point of $U$ is finite, then the quotient $X=U/G$ admits a natural orbifold structure such that $\pi_1^{orb}(X)=G$.

Following Corlette and Simpson \cite{MR2457528} (see also \cite{MR3522824} and references therein), a polydisk Shimura modular orbifold is a  quotient $\mathfrak H$ of  a polydisk $\mathbb D^n$ by a  group of the form $U(P,\Phi)$ where
$P$ is a projective module of  rank two over the ring of integers $\mathcal O_L$ of a  totally imaginary  {quadratic extension}
$L$ of a totally real number field $F$; $\Phi$ is a skew hermitian form on $P_L=P \otimes_{\mathcal O_L} L$; and $U(P,\Phi)$  is the subgroup of the $\Phi$-unitary group  $U(P_L,\Phi)$
consisting of elements which preserve $P$. This group acts naturally  on $\mathbb D^n$ where $n$ is half the number of
embeddings $\sigma : L \to \C$ such that the quadratic form $\sqrt{-1} \Phi(v,v)$ is indefinite. The aforementioned action is explained in details  in \cite[\S 9]{MR2457528}.
 Note that there is one tautological representation
 \[
 \pi_1^{orb}(\mathbb D^n /\mbox{U}(P,\Phi) ) \simeq \mbox{SU}(P,\Phi)/\{\pm \Id\} \hookrightarrow \PSL(\C) \, ,
 \] 
 which induces for each embedding $\sigma: L \to \C$ one tautological representation  $\pi_1^{orb}(\mathbb D^n /\mbox{U}(P,\Phi) ) \to \PSL(\C)$.
The quotients $\mathbb D^n /\mbox{U}(P,\Phi)$ are always  quasiprojective
orbifolds, and when $[L:\mathbb Q] > 2n$ they are projective (i.e. proper/compact) orbifolds.
The archetypical examples satisfying $[L:\mathbb Q] =2n$ are the Hilbert modular orbifolds, which are quasiprojective but
not projective.

\subsection{Representations and factorization}

A crucial point of the proofs of our results consists in applying some results of factorization of representation of fundamental groups.

\begin{dfn}
\label{def factor}
Let $X$ be a (complex) manifold and let $\rho\colon \pi_1(X)\to G$ be a representation. We say that $\rho$ \emph{factors} through a map $f\colon X\to Y$ towards a manifold $Y$ if there exists a representation $\bar \rho\colon \pi_1(Y) \to G$ such that
$$\rho=\bar\rho \circ f_*.$$
Similarly, we say that $\rho$ \emph{factors} through a map $f\colon X\to Y$ towards an orbifold $Y$ if there exists a representation $\bar \rho\colon \pi_1^{orb}(Y) \to G$ such that
$$\rho=\bar\rho \circ f_*.$$
\end{dfn}

A classical question about the representations of fundamental groups of manifolds is the existence of a "universal factor", in the sense of the following definition. Note that the classical definition of Shafarevich morphism deals with (images in $X$ of) proper normal complex spaces instead of algebraic subvarieties.
\begin{dfn}
\label{dfn Shafarevich}
Let $X$ be a smooth quasi-projective variety and let $\rho \colon \pi_1(X)\to G$ be a representation which factors through an algebraic morphism $f\colon X \to Y$. We say that $f$ is the \emph{Shafarevich morphism} associated to $\rho$ if, for any normal connected algebraic subvariety $Z\subset X$, we have the equivalence
$$\rho(\pi_1(Z))=\{e\} \qquad \Leftrightarrow \qquad f(Z)=\{pt.\}.$$
\end{dfn}

Remark that, if it exists, the Shafarevich morphism associated to a representation is unique.

\subsection{Lifting pseudo-automorphisms}

In order to get rid of orbifold points in factorizations of representations, we will need some results of lifting of pseudo-automorphisms to finite \'etale covers.

\begin{lemma}
\label{finitely generated group}
Let $G$ be a finitely generated group and $H\leq G$ a finite index subgroup. Then there exists a finite index subgroup $H'\leq H$ such that $\phi(H')=H'$ for all $\phi\in \Aut(G)$.
\end{lemma}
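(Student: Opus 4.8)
The plan is to take for $H'$ the \emph{characteristic core} of $H$, obtained by intersecting all subgroups of $G$ of the fixed index $n:=[G:H]$. Let $\mathcal H_n$ denote the set of all subgroups of $G$ of index exactly $n$. The essential point, and the only place where finite generation is used, is that $\mathcal H_n$ is \emph{finite}; this is a classical fact (going back to M. Hall). I would recall its proof as follows: writing $G=\langle g_1,\dots,g_k\rangle$, any $K\in\mathcal H_n$ gives rise to the action of $G$ on the left cosets $G/K$; after fixing a bijection of $G/K$ with $\{1,\dots,n\}$ sending the coset $K$ to $1$, this action becomes a homomorphism $\psi_K\colon G\to \mathfrak S_n$, and one recovers $K=\psi_K^{-1}(\mathrm{Stab}(1))$. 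Since a homomorphism out of $G$ is determined by the images of the $k$ generators, there are at most $(n!)^k$ such homomorphisms, and hence at most $(n!)^k$ subgroups in $\mathcal H_n$.

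Having established finiteness, I would set
$$H':=\bigcap_{K\in\mathcal H_n}K.$$
Since $H\in\mathcal H_n$, one immediately gets $H'\le H$. Moreover $H'$ is a finite intersection of finite-index subgroups, hence itself of finite index in $G$ (using the elementary bound $[G:H']\le n^{\,|\mathcal H_n|}$ obtained by iterating the inequality $[G:A\cap B]\le[G:A]\,[G:B]$).

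Finally I would verify invariance under $\Aut(G)$. Any $\phi\in\Aut(G)$ is in particular a bijection of $G$ that sends subgroups to subgroups and preserves the index, so it restricts to a permutation of the finite set $\mathcal H_n$. Using that $\phi$ is bijective, one has
$$\phi\Bigl(\bigcap_{K\in\mathcal H_n}K\Bigr)=\bigcap_{K\in\mathcal H_n}\phi(K)=\bigcap_{K'\in\mathcal H_n}K'=H',$$
where the middle equality merely relabels the factors of the intersection according to the permutation induced by $\phi$. Thus $\phi(H')=H'$ for every $\phi\in\Aut(G)$, as required.

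The only genuinely non-formal ingredient is the finiteness of $\mathcal H_n$, which is exactly the step that requires $G$ to be finitely generated; everything else is a direct verification. I would expect this finiteness statement to be the one point worth spelling out, while the two remaining claims (that $H'$ has finite index and that it is preserved by every automorphism) follow formally from the construction as an intersection over the full $\Aut(G)$-invariant family $\mathcal H_n$.
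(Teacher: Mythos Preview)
Your proof is correct and follows essentially the same strategy as the paper: build an $\Aut(G)$-invariant finite family of finite-index subgroups and intersect them. The only difference is cosmetic: the paper first passes to the normal core of $H$ and then intersects all \emph{normal} subgroups of index at most $i_H^{i_H}$, whereas you intersect directly over all subgroups of index exactly $n=[G:H]$; your variant is slightly more streamlined since it avoids the detour through the normal core, but the underlying idea and the use of finite generation are identical.
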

\begin{proof}
Denote by $i_H=[G:H]$ the index of $H$ in $G$. Let 
$K=\bigcap_{g\in G}gHg\inv$
be the normal core of $H$. Then $K$ is a normal, finite index subgroup of $H$; more precisely, its index is $i_K=[G:K]\leq i_H^{i_H}$.

Remark that, for a fixed $i$, there are only finitely many normal subgroups of $G$ with index $\leq i$: indeed, such a subgroup can be identified with the kernel of a morphism $G\to G_i$, where $G_i$ is a finite group of cardinality $\leq i$. Since there exist only finitely many such groups $G_i$ and since $G$ is finitely generated, there exist only finitely many equivalence classes of such morphisms, hence only finitely many normal subgroups of $G$ with index $\leq i$.

Fix $i=i_H^{i_H}$, let $\mc S_i$ denote the finite set of normal subgroups of $G$ with index $\leq i$, and let
$$H'=\bigcap_{G'\in \mc S_{i}} G'.$$
Then $H'$ is a normal subgroup of $G$ with finite index, $H' \subseteq H$, and, since every automorphism $\phi$ of $G$ fixes the set $\mc S_i$, a fortiori we have $\phi(H')=H'$.
\end{proof}

\begin{cor}
\label{lift aut}
Let $X$ be a quasi-projective complex manifold and $\nu\colon X'\to X$ be a finite \'etale cover.\\
Then there exists a finite \'etale cover $\eta \colon X''\to X'$ of $X'$ such that every pseudo-automorphism of $X$ lifts to a pseudo-automorphism of $X''$.
\end{cor}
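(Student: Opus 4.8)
The plan is to pass through the Galois correspondence between finite \'etale covers and finite-index subgroups of $\pi_1$, and to reduce everything to Lemma \ref{finitely generated group}. Since $X$ is quasi-projective, $G:=\pi_1(X)$ is finitely generated, and the given cover $\nu$ corresponds to a finite-index subgroup $H:=\nu_*\pi_1(X')\leq G$. Applying Lemma \ref{finitely generated group} to the pair $H\leq G$ produces a finite-index subgroup $H'\leq H$ with $\phi(H')=H'$ for every $\phi\in\Aut(G)$. I would then let $\eta\colon X''\to X'$ be the finite \'etale cover such that the composite $X''\to X'\xrightarrow{\nu} X$ corresponds to $H'$; this factors through $X'$ precisely because $H'\leq H$, and $X''$ is again a quasi-projective manifold since \'etale covers of such are. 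The claim is that $X''$ does the job.

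The crux---and the step I expect to be the main obstacle---is to make sense of the action of a pseudo-automorphism on $\pi_1(X)$, given that a pseudo-automorphism is only a birational map and not a morphism. Here I would use that $f\colon X\rato X$ restricts to an isomorphism $f\colon U\xrightarrow{\sim} V$ with $\codim(X\setminus U),\codim(X\setminus V)\geq 2$, together with the fact that removing a closed analytic subset of complex codimension $\geq 2$ (hence real codimension $\geq 4$) from the smooth manifold $X$ leaves the fundamental group unchanged. Thus the inclusions induce isomorphisms $\pi_1(U)\cong G\cong\pi_1(V)$, and $f$ yields an element $f_*\in\Aut(G)$, well defined up to inner automorphism once base points and a connecting path are fixed. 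The invariance built into Lemma \ref{finitely generated group} is tailored exactly to this ambiguity: because $H'$ is preserved by \emph{every} automorphism of $G$, the inner ones included, we obtain $f_*(H')=H'$ with no further hypothesis.

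It then remains to promote the equality $f_*(H')=H'$ to an actual lift. I would restrict the cover $\pi\colon X''\to X$ over $U$ and $V$, observe that $\pi^{-1}(U)\to U$ and $\pi^{-1}(V)\to V$ correspond to $H'\leq\pi_1(U)$ and $H'\leq\pi_1(V)$, and note that pulling back $\pi^{-1}(V)\to V$ along $f\colon U\xrightarrow{\sim} V$ gives the cover of $U$ attached to $(f_*)^{-1}(H')=H'$, hence a cover isomorphic to $\pi^{-1}(U)\to U$. Such an isomorphism is precisely a lift $\tilde f\colon\pi^{-1}(U)\xrightarrow{\sim}\pi^{-1}(V)$ of $f$. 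Finally, since $\pi$ is \'etale, the complements $X''\setminus\pi^{-1}(U)=\pi^{-1}(X\setminus U)$ and $X''\setminus\pi^{-1}(V)=\pi^{-1}(X\setminus V)$ again have codimension $\geq 2$ in $X''$, so $\tilde f$ extends to a birational self-map of $X''$ which is an isomorphism away from sets of codimension $\geq 2$, that is, a pseudo-automorphism of $X''$ lifting $f$.
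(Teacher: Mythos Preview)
Your proof is correct and follows essentially the same route as the paper: apply Lemma~\ref{finitely generated group} to obtain a fully characteristic $H'\leq G=\pi_1(X)$, use the codimension-$\geq 2$ invariance of $\pi_1$ to make a pseudo-automorphism act on $G$, and then lift via the Galois correspondence. The only cosmetic difference is that the paper phrases the lifting step through the classical criterion ``$f\circ\pi$ lifts iff $(f\circ\pi)_*\pi_1(X''_U)\subset H'$'' rather than your equivalent pull-back-of-covers formulation.
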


\begin{proof}
Let $G=\pi_1(X)$ and $H=\pi_1(X')$; since $X$ is quasi-projective, $G$ is finitely generated. The injection $\nu_*\colon H\to G$ allows to identify $H$ with a finite index subgroup of $G$; by Lemma \ref{finitely generated group}, we can find a finite index subgroup $H'\leq H$ which is stable by all automorphisms of $G$. Let $\eta\colon X''\to X'$ be the \'etale finite cover corresponding to the inclusion $H'\leq H$ and let
$$\pi=\nu\circ \eta \colon X'' \to X.$$

Now let $f\colon X\rato X$ be a  pseudo-automorphism; let $U=dom(f)$ be the domain of $f$ and $X''_U=\pi\inv(U)$ be the inverse image of $U$. Then the composition
$$f\circ \pi\colon X''_U\to X$$
lifts to a (rational) morphism $X''_U \to X''$ if and only if
$$(f\circ \pi)_* \pi_1(X''_U) \subset \pi_* \pi_1(X'')=H'.$$

Remark that, if $W\subset Y$ is an analytic subset of a complex manifold $Y$ whose complement $Y\setminus W$ has codimension $\geq 2$, then $\pi_1(Y)\cong \pi_1(W)$. More accurately, if $q\in W$, the inclusion $i\colon W \hookrightarrow Y$ induces an isomorphism of fundamental groups
$$i_*\colon \pi_1(W,q)\xrightarrow{\sim} \pi_1(Y,q).$$
This implies that the inclusion induces an isomorphism $\pi_1(U)\cong \pi_1(X)$. Similarly, if $V\subset X$ denotes the domain of $f\inv$, the inclusion induces an isomorphism $\pi_1(V)\cong \pi_1(X)$.

Now, it is not hard to see that the composition
$$\pi_1(X) \cong \pi_1(U) \overset{f_*}{\to} \pi_1(X) \cong \pi_1(V) \overset{(f\inv)_*}{\to} \pi_1(X)$$
is the identity morphism; this means that $f$ induces an automorphism of $\pi_1(X)$.\\
Therefore,
$$(f\circ \pi)_* \pi_1(X''_U) =f_* H' \subset \pi_* \pi_1(X'')=H',$$
which concludes the proof.
\end{proof}

\begin{cor}
\label{lift psaut structure}
Let $\F$ be a codimension $1$ foliation on a smooth projective manifold $X$; assume that $\F$ admits a transverse hyperbolic or projective structure and let
$$G\leq \Psaut(X,\F)$$
be the subgroup of pseudo-automorphisms of $X$ which preserve $\F$ and its transverse structure.\\
Let $X_0=X\setminus H$ be the smooth locus of the structure and let $X_0'\to X_0$ be a finite \'etale cover. Then, after possibly replacing $X_0'$ by a finite \'etale cover, all elements of $G$ lift to pseudo-automorphisms of $X_0'$.
\end{cor}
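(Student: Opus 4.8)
The goal is to lift the action of $G$ to a finite étale cover of $X_0$. The plan is to reduce the statement to the previous lifting result, Corollary \ref{lift aut}, by verifying that the elements of $G$ act as pseudo-automorphisms of the quasi-projective manifold $X_0=X\setminus H$, not merely of the projective manifold $X$.

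First I would observe that each $g\in G$ is a pseudo-automorphism of $X$ which by hypothesis preserves both $\F$ and its transverse structure. The degeneracy locus $H$ (the polar hypersurface of the current $T$ in the hyperbolic case, or of the connection $\nabla$ in the projective case) is intrinsically attached to the transverse structure: it is the hypersurface where the distinguished first integrals fail to be well-defined. Since $g$ preserves the transverse structure, it must preserve $H$ as well, hence it restricts to a pseudo-automorphism of the complement $X_0=X\setminus H$. The one subtlety is that $g$, being only a pseudo-automorphism, may have indeterminacy locus and may fail to be an isomorphism on a codimension $\geq 2$ subset; but since pseudo-automorphisms neither contract nor create hypersurfaces, the restriction $g|_{X_0}$ is again a pseudo-automorphism of $X_0$, inducing an isomorphism between two Zariski-open subsets of $X_0$ with complements of codimension $\geq 2$.

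Once this is established, I would apply Corollary \ref{lift aut} directly to the finite étale cover $\nu\colon X_0'\to X_0$ of the quasi-projective manifold $X_0$. That corollary produces a further finite étale cover $\eta\colon X_0''\to X_0'$ such that every pseudo-automorphism of $X_0$ lifts to a pseudo-automorphism of $X_0''$. In particular every $g\in G$, viewed via its restriction $g|_{X_0}$ as a pseudo-automorphism of $X_0$, lifts to a pseudo-automorphism of $X_0''$. Replacing $X_0'$ by $X_0''$ gives the desired cover. The proof of Corollary \ref{lift aut} only used that $X$ is a quasi-projective complex manifold (so that $\pi_1$ is finitely generated and the codimension-$2$ argument on fundamental groups applies), so the hypotheses are met verbatim for $X_0$.

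The main obstacle is the one flagged above: confirming that preserving the transverse structure forces $g$ to preserve the degeneracy divisor $H$, so that $g$ genuinely restricts to $X_0$. This requires that $H$ be canonically determined by the structure rather than being an auxiliary choice. In the hyperbolic case this is clear, since $N=\mathrm{Supp}(N)$ is read off from the curvature identity $\Theta_T=-(T+[N])$ and $g^*T$ equals $T$ up to the action on the structure, so $g$ permutes the irreducible components of $H$; a parallel argument applies to the polar locus of $\nabla$ in the projective case, where $g$ sends the triple $(E,\nabla,\sigma)$ to a birationally equivalent triple with the same polar hypersurface. I would spell out this invariance carefully, since it is exactly the point that upgrades a pseudo-automorphism of $X$ to one of $X_0$ and thereby makes Corollary \ref{lift aut} applicable.
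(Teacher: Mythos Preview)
Your proposal is correct and follows essentially the same approach as the paper: reduce to Corollary~\ref{lift aut} by showing that each $g\in G$ restricts to a pseudo-automorphism of $X_0$. The only difference is in how the key point is argued. You phrase it as ``$H$ is intrinsic to the structure, hence $g$-invariant,'' whereas the paper argues contrapositively and pointwise: for a hypersurface $D\not\subset H$, pick $p\in D$ where $f$ is a local isomorphism; since the structure is regular at $p$, its push-forward gives a regular transverse projective structure at $f(p)$, and since $f$ preserves the structure this forces $f(p)\notin H$, hence $f(D)\not\subset H$. This avoids having to justify that the polar divisor is independent of the representative triple $(E,\nabla,\sigma)$ in the birational equivalence class---a point where your sketch (``birationally equivalent triple with the same polar hypersurface'') is a little loose, since gauge transformations can alter the poles of $\nabla$. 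The paper's formulation sidesteps this entirely by working with regularity of the structure at a point rather than with $H$ as a global object.
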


\begin{proof}
By Corollary \ref{lift aut}, we only need to show that a pseudo-automorphism of $X$ which preserves $\F$ and its transverse structure restricts to a pseudo-automorphism of $X_0$. In order to prove this, one needs to check that if $D\subset X$ is a hypersurface which is not contained in $H$, then the strict transform $f(D)$ (which is a hypersurface because $f$ does not contract any divisor) is not contained in $H$.\\
Indeed, if $p\in D$ denotes a point where $f$ is well-defined and a local isomorphism, the push-forward by $f$ defines a transverse projective structure for $\F$ at a neighborhood of $f(p)$; since we assumed that the transverse structure is preserved, this implies that $f(p)\notin H$.
\end{proof}

Remark that the uniqueness assumption is automatically satisfied in the hyperbolic case (see Remark \ref{hyp->pseff}); in the general case, one needs to impose that $\F$ doesn't come from a foliation defined by a closed rational form (see Lemma \ref{uniqueness projective structure}).

\section{The transversely hyperbolic case}
\label{sec:hyperbolic}

Throughout this section, we denote by $\F$ a foliation admitting a (branched) transverse hyperbolic structure, by $H\subset X$ the hypersurface along which the structure degenerates, namely the support of the negative part of $c_1(N_\F^*)$, and by $X_0:=X\setminus H$ the regular locus of the structure.  The monodromy of the structure is a homomorphism
$$\rho\colon \pi_1(X_0)\to \PSL_2(\R).$$

\subsection{Finiteness of the transverse action}

The goal of this section is to prove the following:

\begin{thm}
\label{hyperbolic transverse action}
Let $X$ be a projective manifold and let $\F$ be a transversely hyperbolic codimension $1$ foliation. Then
\begin{itemize}
\item there exists a generically finite morphism $f\colon X' \to X$ (which is finite \'etale over $X_0$) and a fibration $\pi\colon X\to B$ onto a projective variety $B$ of general type such that $f^*\F$ is the pull-back of a foliation on $B$;
\item the transverse action of $\Psaut(X,\F)$ is finite.
\end{itemize}
\end{thm}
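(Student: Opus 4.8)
The plan is to use the monodromy representation $\rho\colon \pi_1(X_0)\to \PSL_2(\R)$ together with the Corlette--Simpson factorization theorem. Since $\PSL_2(\R)$ acts on the disk $\mb D$, the developing map $\dev\colon \widetilde{X_0}\to \mb D$ gives us a representation into $\PSL_2(\C)$, so I can apply the factorization result: $\rho$ factors either through an orbicurve $C$, or through a morphism $\psi\colon X_0\to \mf H=\mb D^N/\Gamma$ to a polydisk Shimura modular orbifold. First I would use Corollary \ref{lift aut} / Corollary \ref{lift psaut structure} to pass to a suitable finite étale cover of $X_0$ so that the orbifold base becomes an honest smooth variety (killing orbifold points) and so that every element of $G\leq \Psaut(X,\F)$ preserving $\F$ and its structure lifts to a pseudo-automorphism of the cover. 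This is where the generically finite morphism $f\colon X'\to X$ of the statement comes from.

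Next I would treat the two alternatives separately. In the curve case, the representation factors through a map to a hyperbolic orbicurve, and after the étale reduction this yields a fibration over a curve of general type; the transverse structure on $\F$ becomes pulled back from the transverse structure (the hyperbolic metric) on the base curve, so $\F = f^*(\text{something})$ and the base is of general type. In the Shimura case I would invoke Brunebarbe's theorem \cite{brunebarbe2016strong}, which guarantees that the image $B$ of $\psi$ is of general type; this is the variety $B$ appearing in the statement. In both cases the key structural output is that $\pi^*\F$ (or $f^*\F$) is the pullback $\psi^*\G$ of a foliation $\G$ on $B$, because the developing map (hence the leaves of $\F$) is constant along the fibres of $\psi$: two points in the same fibre have the same image under the distinguished first integrals, so they lie on the same leaf, which is exactly the statement that $\F$ is pulled back from $B$.

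For the finiteness of the transverse action, the crucial observation is that $\psi$ (resp. the fibration over the curve) should be intrinsically attached to $\rho$, namely it is essentially the Shafarevich morphism of $\rho$ in the sense of Definition \ref{dfn Shafarevich}. Because the Shafarevich morphism is unique (as remarked after that definition) and because any $g\in G$ induces an automorphism of $\pi_1(X_0)$ (this is the content of the computation in the proof of Corollary \ref{lift aut}, where $g$ preserves the structure hence conjugates $\rho$ to an equivalent representation), the morphism $\psi$ must be $g$-equivariant up to an automorphism of the base $B$. This realizes a finite-index subgroup of $G$ inside a group that fibers over $\Aut(B,\G)$. Since $B$ is of general type, $\Aut(B)$ is finite, so after passing to a finite-index subgroup we may assume every element acts as the identity on $B$; such an element preserves each fibre of $\psi$, and since $\F$ is pulled back from $B$, it preserves each leaf of $\F$. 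This gives the transverse finiteness.

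The main obstacle I expect is the step showing that $\psi$ is canonically determined by $\rho$ and hence equivariant under $G$ — i.e. identifying $\psi$ with the Shafarevich morphism and controlling the interaction between the pseudo-automorphism $g$ (which is only birational, with indeterminacy) and the morphism $\psi$ over the degeneracy locus $H$. A pseudo-automorphism need not restrict to an honest automorphism of $X_0$ unless it preserves $H$, and one must verify that $g$ does not send components of $H$ into the interior or vice versa; the regularity of singularities of the transverse structure and the argument of Corollary \ref{lift psaut structure} (that $g$ preserves $H$ because it preserves the structure) are precisely what is needed here. A secondary technical point is ensuring that, after all the étale covers and base changes, the developing map still descends to give a genuine foliation $\G$ on the projective variety $B$ rather than merely on an open subset, which requires extending $\G$ across the boundary using the regular-singularities hypothesis; in the purely hyperbolic case this boundary behaviour is well controlled, which is why the theorem is cleaner here than in the general transversely projective setting.
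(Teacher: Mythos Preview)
Your overall strategy is close to the paper's, but two differences are worth flagging.

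First, the starting dichotomy. You invoke the general Corlette--Simpson factorization (orbicurve vs.\ Shimura), which only says that the \emph{monodromy} $\rho$ factors through $\psi$. The paper instead uses the structure theorem specific to transversely hyperbolic foliations from \cite{MR3644247}: either $\F$ is algebraically integrable, or there is a morphism $\Psi\colon X\to \mf H$ defined on all of $X$ (not just $X_0$) with $\F=\Psi^*\G_i$. This is strictly stronger than monodromy factorization, and it makes the equivariance step immediate: for $f\in\Psaut(X,\F)$ and a fibre $F$ of the Stein factorization, $\Psi(f(F))$ is a $\G_i$-invariant subvariety of $\mf H$, hence a point by \cite[Proposition 3.4]{rousseautouzet15}. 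No Shafarevich-morphism machinery is needed here; the paper reserves that device (Lemma~\ref{Shimura Shafarevich}) for the transversely projective case, where one does not have $\F=\Psi^*\G_i$ a priori.

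Second, your claim that ``the developing map is constant along fibres of $\psi$'' has a gap as written. Trivial monodromy along a fibre yields a well-defined holomorphic map from that fibre to $\mb D$, but the fibres of $\psi\colon X_0\to\mf H$ are a priori non-compact quasi-projective varieties, and such a variety can certainly map non-constantly to $\mb D$. To conclude constancy you must compactify and extend the first integral across the boundary; this is exactly the content of the paper's proof of Lemma~\ref{Shimura Shafarevich}. The paper's hyperbolic argument handles this differently: it first shows (using rationality of the negative part $\{N\}$ and the Selberg reduction to torsion-free monodromy) that the local monodromies around $H$ are trivial, so $\rho$ extends to $\pi_1(X)$ and $\Psi$ is defined on all of $X$, making the fibres compact. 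Either route works, but the compactness/extension step is essential and cannot be skipped.
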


\begin{proof}
As we saw in Remark \ref{hyp->pseff}, the existence of a hyperbolic structure on $\F$ implies that the conormal bundle $N^*_\F$ is pseudo-effective. Therefore, by \cite{MR3644247} (one needs to combine Theorem 1, Proposition 4.6 and Theorem 4 of \emph{loc.cit.} and remark that we are in the case $\epsilon =1$) we have two (non-mutual) possibilities:
\begin{enumerate}
\item either $\F$ is algebraically integrable;
\item or there exists a morphism
$$\Psi \colon X\to \mf H=\mb D^N/\Gamma$$
such that $\F=\Psi^*\G_i$, where $\G_i$ denotes one of the modular foliations on $\mf H$.
\end{enumerate}

First, we may assume that the image by $\rho$ of $\pi_1(X_0)$ is torsion-free. Indeed, by Selberg's lemma this is true for a finite index subgroup $G$ of $\pi_1(X_0)$; replace $X_0$ by its finite \'etale cover $X_0' \to X_0$ corresponding to $G$. The pull-back foliation $\F'$ on $X_0'$ is naturally endowed with a transverse hyperbolic structure, whose monodromy identifies with the restriction of $\rho$ to $G$.\\
By \cite[Theorem 1]{MR3124741} the transverse hyperbolic structure is unique, so that in particular it is preserved by $\Psaut(X,\F)$. Therefore, by Corollary \ref{lift psaut structure}, after possibly taking another finite \'etale cover, all elements of $\Psaut(X, \F)$ lift to pseudo-automorphisms of $X_0'$; of course, the lifts preserve $\F'$ and its transverse hyperbolic structure.\\
Let $X'$ be a smooth compactification of $X_0'$; in order to show the claim for the pair $(X,\F)$, it suffices to show it for the pair $(X', \F')$. Therefore, from now on we will suppose that the monodromy of the structure is torsion-free.

Let $T$ be a current which defines the transverse hyperbolic structure and let $\Theta_T=-T-[N]$ be the associated curvature current. Then $-\Theta_T$ is an $\F$-invariant closed positive current (which represents $c_1(N^*_\F)$), and by \cite[Proposition 2.10(vi)]{MR3124741} the negative part $\{N\}\in H^{1,1}(X,\R)$ is rational. This implies that the monodromy of the structure around the components of $H$ is finite, hence trivial since we assumed that the monodromy is torsion-free.\\
Therefore, by the Riemann extension theorem, a distinguished first integral defined in a small open set in the complement of $H$ extends through $H$, meaning that the representation $\rho$ actually factors through $\pi_1(X)$.

Let us treat first the case where $\F$ is algebraically integrable, or, equivalently, the monodromy $\Gamma=\rho(\pi_1(X)) \leq \PSL_2(\R)$ of the transverse structure is discrete and cocompact (see \cite[Proposition 4.6]{MR3644247}). Let $\widetilde X\to X$ be the universal cover and
$$\tilde \pi \colon \widetilde X \to \mb D$$
be the developing map. By \cite[Theorem 3.2 and \textsection 3.2]{MR3644247} $\tilde \pi$ is surjective and has connected fibres. The fibration obtained by quotient
$$\pi\colon X \to C:=\mb D/\Gamma$$
defines the foliation $\F$.\\
The curve $C$ is uniformized by the disk, therefore it is of general type. In order to conclude, it suffices to remark that elements of $\Psaut(X,\F)$ preserve $\pi$ by definition, and the action on $C$ is identified with the transverse action on $\F$; since the group of automorphisms of a curve of general type is finite, the claim is proved.

%
%
%
%

From now on suppose that we are in the second case: there exists a morphism
$$\Psi \colon X\to \mf H=\mb D^N/\Gamma$$
such that $\F=\Psi^*\G_i$, where $\G_i$ denotes one of the modular foliations on $\mf H$. The monodromy $\rho$ factors through $\Psi|_{X_0}$.

As before, we may assume that $\Gamma$ is torsion-free: indeed, by Selberg's lemma there exists a finite index subgroup $\Gamma'\leq \Gamma$ which is torsion-free.\\
 Replace $X_0$ by its finite \'etale cover $e\colon X_0' \to X_0$ corresponding to the finite index subgroup $\Psi_*\inv(\Gamma')\leq \pi_1(X_0)$. By Corollary \ref{lift psaut structure}, up to taking another finite \'etale cover all elements of $\Psaut(X,\F)$ lift to pseudo-automorphisms of $X_0'$. If $X'$ denotes a smooth compactification of $X_0'$ such that $e$ extends through $X'\setminus X_0'$, we can reason on the pull-back foliation $e^*\F$ on $X'$.

If we denote by
$$X  \overset{\pi}{\to} B \to Z =\im(\Psi) \subset \D^N/\Gamma$$
the Stein factorization of $\Psi$, $\pi$ is $\Psaut(X, \F)$-equivariant: indeed, if $F\subset X$ is a fibre of $\pi$ and $f\in \Psaut(X,\F)$, the algebraic subvariety 
$$\Psi(f(F))\subset \mb D^N/\Gamma$$
is $\G_i$-invariant, and by \cite[Proposition 3.4]{rousseautouzet15} it is reduced to a point. This proves that $\Psaut(X,\F)$ acts by pseudo-automorphisms on $B$.

Since the quotient $\mb D^N/\Gamma$ is smooth and the subvariety $Z\subset \mb D^N/\Gamma$ is compact, by \cite{MR3859271} $Z$ has big cotangent bundle; therefore, by \cite{MR3449168}, $Z$ is a projective variety of general type, hence so is $B$ by pull-back of canonical forms.\\
Since the group of birational transformations of a variety of general type is finite, a finite index subgroup $G\leq \Psaut(X,\F)$ fixes each fibre of $\pi$.

Let $\G$ be the pull-back foliation of $\F_i|_{Z}$ on $B$; we have shown that $\F=\pi^*\G$ and that $B$ is of general type. Furthermore, the finite index subgroup $G\leq \Psaut(X,\F)$ preserves each fibre of $\pi$, hence in particular each leaf of $\F$. This concludes the proof.
\end{proof}

\subsection{Entire curves and special manifolds}

\begin{thm}\label{degen}
Let $X$ be a projective manifold and $\F$ a transversely hyperbolic foliation of codimension $1$ on $X$.
Then any entire curve $f: \C \to X$ is algebraically degenerate i.e. $f(\C)$ is not Zariski dense.
\end{thm}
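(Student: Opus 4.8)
The plan is to leverage the structural result just established in Theorem \ref{hyperbolic transverse action}, which tells us that (after a generically finite morphism) the foliation $\F$ is the pull-back of a foliation on a variety of general type. The key point is that this structure theorem provides a fibration onto a base with enough negativity (hyperbolicity) to force algebraic degeneracy of entire curves.

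First I would reduce to the structure provided by the previous theorem. By Theorem \ref{hyperbolic transverse action}, there is a generically finite morphism $f\colon X'\to X$ and a fibration onto a projective variety $B$ of general type (either a curve $C=\mb D/\Gamma$ in the algebraically integrable case, or the base $B$ of the Stein factorization of $\Psi\colon X\to \mb D^N/\Gamma$ in the general case). An entire curve $g\colon \C\to X'$ (which exists lifting $f$ after passing to the étale cover, at least generically, since an entire curve into $X$ lifts to a finite étale cover or its image is contained in the branch locus) composes with the fibration to give an entire curve into $B$. Since $B$ is of general type, and more precisely since $B$ arises as a subvariety $Z\subset \mb D^N/\Gamma$ with \emph{big cotangent bundle} (as noted in the proof above, via \cite{MR3859271}), one expects strong hyperbolicity properties: entire curves into $Z$ are algebraically degenerate. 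This is exactly the kind of statement available for quotients of bounded symmetric domains and subvarieties with big cotangent bundle.

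The precise mechanism I would invoke is that a compact subvariety $Z$ of a polydisk quotient $\mb D^N/\Gamma$ (or a variety with big cotangent bundle) satisfies algebraic degeneracy of entire curves; one deduces this from the big cotangent bundle property together with the second main theorem / jet differential techniques of Green–Griffiths–Demailly type, or directly from the fact that $Z$ is hyperbolic modulo a proper subvariety. Concretely, the composed entire curve $\pi\circ g\colon \C\to B$ lands in a proper algebraic subvariety of $B$; pulling back, $g(\C)$ lands in a union of fibres over this subvariety, so it is not Zariski dense in $X'$. Since $f$ is generically finite, the image $f(g(\C))$ is likewise not Zariski dense in $X$. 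In the algebraically integrable case this is even more transparent: $B=C$ is a hyperbolic curve, so by the little Picard theorem every entire curve $\C\to C$ is constant, hence $g(\C)$ lies in a single fibre.

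The main obstacle will be the lifting step and handling the degeneracy divisor $H$. Since $f\colon X'\to X$ is only generically finite (étale over $X_0=X\setminus H$ but possibly ramified or with the compactification behaving badly over $H$), an arbitrary entire curve $f_0\colon \C\to X$ need not lift to $X'$: it lifts only if its image is not contained in the branch locus, or after noting that $\C$ is simply connected so a lift exists whenever $f_0(\C)\not\subset H$. I would therefore split into cases: if $f_0(\C)\subset H$, then $f_0$ is already algebraically degenerate (being contained in a hypersurface); otherwise $f_0(\C)\cap X_0\neq\emptyset$ and, since $\widetilde{X_0}\to X_0$ pulls back to a cover over which $\C$ lifts, one obtains the lift $g$ and applies the argument above. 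Care is needed to ensure the lift extends across the preimage of $H$ in the compactification $X'$, but this follows from the properness of $X'$ and the removable singularity / extension properties of entire curves into compact targets.
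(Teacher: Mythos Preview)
Your approach can be made to work, but it takes a more circuitous route than the paper and contains a genuine imprecision at the key step.

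The paper's proof is shorter and more direct. It first proves a lemma: every entire curve $f\colon \C\to X$ is \emph{tangent} to $\F$. This is an immediate application of the Ahlfors--Schwarz lemma: if $f$ were somewhere transverse to $\F$, pulling back the transverse hyperbolic metric would yield a nontrivial singular metric on $\C$ with curvature $\leq -1$ in the sense of currents, which is impossible. Once tangency is known, the structure result from \cite{MR3644247} (applied directly on $X$, with no cover needed) gives either a fibration, so $f(\C)$ lies in a single algebraic leaf, or a morphism $\Psi\colon X\to \mf H=\D^N/\Gamma$ with $\F=\Psi^*\G$ for a tautological foliation $\G$; then $\Psi\circ f$ is tangent to $\G$, whose leaves are copies of $\D$, hence $\Psi\circ f$ is constant by \cite{rousseautouzet15} and $f(\C)$ lies in a fibre of $\Psi$.

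Your route avoids the tangency lemma, and in principle this is fine: since $\D^N/\Gamma$ is a quotient of a bounded domain it is Kobayashi hyperbolic, so \emph{any} entire curve into it is constant, tangent or not. But you do not invoke this. Instead you appeal to ``big cotangent bundle together with jet differential techniques of Green--Griffiths--Demailly type'' to obtain algebraic degeneracy in $B$. That is a gap: algebraic degeneracy of entire curves for varieties of general type, or even for varieties with big cotangent sheaf, is the Green--Griffiths--Lang conjecture and is not a theorem in the generality you state. What actually rescues your argument is the far more elementary observation that $Z\subset\D^N/\Gamma$ is Kobayashi hyperbolic, hence so is $B$ (being finite over $Z$), so the composed entire curve is \emph{constant}, not merely degenerate. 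You should invoke this directly rather than gesturing at jet methods.

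Finally, the passage through the generically finite cover $X'\to X$ and the attendant lifting discussion are unnecessary detours: the morphism $\Psi\colon X\to\mf H$ from \cite{MR3644247} is already defined on $X$ itself, so one can compose $f$ with $\Psi$ directly and bypass the branch locus and extension issues entirely.
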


\begin{rem}
If $X$ is not projective, the statement is false as we will see below in the example of Inoue surfaces which always admit Zariski dense entire curves.
\end{rem}

We shall start with a lemma.

\begin{lemma}
Let $X$ be a projective  manifold and $\F$ a transversely hyperbolic foliation of codimension $1$ on $X$.
Then any entire curve $f: \C \to X$ is tangent to $\F$.
\end{lemma}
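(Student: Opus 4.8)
The plan is to argue by contradiction, exploiting the negative curvature of the transverse structure together with the Ahlfors--Schwarz lemma. Suppose $f\colon\C\to X$ is an entire curve which is \emph{not} tangent to $\F$. Since the degeneracy locus $H$ is $\F$-invariant, hence a union of leaves, an entire curve contained in $H$ would automatically be tangent to $\F$; thus I may assume $f(\C)\not\subset H$, and likewise $f(\C)\not\subset\Sing(\F)$. Consequently $f\inv(H\cup\Sing(\F))$ is a discrete subset of $\C$, and on its complement the pull-back $f^*T$ of the current $T$ defining the transverse hyperbolic structure is well defined.

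Next I would make this pull-back explicit. Near a point $p=f(t_0)\in X_0\setminus\Sing(\F)$ choose a distinguished first integral $F\colon U\to\mb D$, so that $T|_U$ is the pull-back by $F$ of the Poincaré metric $\mu$ of $\mb D$. Writing $\phi=F\circ f$, the form $f^*T=\phi^*\mu$ is locally of the shape $e^{2u}\,i\,dt\wedge d\bar t$, and since the distinguished first integrals differ by elements of $\PSL_2(\R)=\Isom(\mb D)$, which act as isometries of $\mu$, these local expressions glue to a globally defined semipositive pseudo-metric on $\C\setminus f\inv(H\cup\Sing(\F))$. Because $f$ is transverse to $\F$ at a generic point, this pseudo-metric is not identically zero: if $\omega$ is a local $1$-form defining $\F$, then on $X_0$ one has $T=ie^{2\psi}\omega\wedge\bar\omega$ with $e^{2\psi}>0$, so $f^*T$ vanishes precisely where $f^*\omega=0$, i.e. precisely where $f$ is tangent to $\F$.

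The heart of the argument is the curvature estimate. The defining equation $\Theta_T=-(T+[N])$ of the branched hyperbolic structure, together with $[N]\ge 0$, gives $-\Theta_T=T+[N]\ge T$; unwinding this in the local potential $u$ above, and using that $\log|f^*\omega|$ is harmonic away from its zeros (where it contributes only non-negative point masses), shows that the Gaussian curvature of the pseudo-metric $f^*T$ is bounded above by a negative constant, in the sense of currents. I would then invoke the Ahlfors--Schwarz lemma: restricting to the disc $\{|t|<R\}$ and comparing $f^*T$ with the Poincaré metric of that disc yields a pointwise bound which, on letting $R\to\infty$ (the Poincaré metric of $\{|t|<R\}$ tends to $0$ on compact sets), forces $f^*T\equiv 0$. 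This contradicts the non-vanishing established above, and therefore $f$ must be tangent to $\F$.

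The main obstacle I anticipate is not the curvature computation, which is classical once set up, but the careful treatment of the singularities: one must check that the pseudo-metric $f^*T$ is genuinely defined and that the curvature inequality persists across $f\inv(H)$, across $f\inv(\Sing(\F))$, and across the zeros of $f^*\omega$. The reduction $f(\C)\not\subset H$ relies on the $\F$-invariance of $H$, and the contribution of the branching divisor $[N]$ must be seen to only improve (make more negative) the curvature bound rather than to spoil it; organizing these local analyses into the single global inequality demanded by Ahlfors--Schwarz is the delicate point.
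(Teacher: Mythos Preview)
Your proposal is correct and follows essentially the same route as the paper: assume $f$ is not tangent to $\F$, pull back the transverse hyperbolic metric $T$ to obtain a non-trivial pseudo-metric on $\C$ with curvature $\le -1$ in the sense of currents, and invoke the Ahlfors--Schwarz lemma to force this pseudo-metric to vanish, a contradiction. The paper's proof is more terse: it packages your singularity concerns into the single assertion that the pulled-back metric $\gamma=\gamma_0\,i\,dt\wedge d\bar t$ has $\log\gamma_0$ subharmonic with $\Ric\gamma\ge\gamma$ in the sense of currents, and then cites the singular Ahlfors--Schwarz lemma (as in Demailly) directly, which absorbs the behavior at $f^{-1}(H\cup\Sing(\F))$ and at the zeros of $f^*\omega$ without requiring the explicit $R\to\infty$ comparison you outline.
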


\begin{proof}
Let us denote by $h$ the transverse metric which is a smooth transverse metric of constant curvature $-1$ on $X\setminus (\Sing(\F) \cup H)$ (where $H$ is the degeneracy locus of the metric).
Suppose $f: \C \to X$ is not tangent to $\F$. In particular, $f(\C) \not\subset \Sing(\F) \cup H$. Therefore $f^*h$ induces a non-zero singular metric $\gamma(t)=\gamma_0(t)i\,dt\wedge d\overline{t}$ on $\C$ where $\log \gamma_0$ is subharmonic and $\Ric \gamma \geq \gamma$ in the sense of currents. But the Ahlfors-Schwarz lemma (see \cite{De97}) implies that $\gamma \equiv 0$, a contradiction.
\end{proof}

Now, we can prove the theorem.

\begin{proof}
From the preceding lemma, we can suppose that $f: \C \to X$ is tangent to $\F$.
From the study of transversely hyperbolic singular foliations \cite{MR3644247}, we have two cases:
either $\F$ is a fibration, and all leaves are algebraic, or $\F$ is obtained as the pull-back $\Psi^*\G$
where $\Psi$ is a morphism of analytic varieties between $X$ and the quotient $\mf H=\D^n/\Gamma$ of a polydisk, by an irreducible lattice $\Gamma \subset (\Aut \D)^n$ and $\G$ is one of the tautological foliation.
Therefore $\Psi(f): \C \to \mf H$ is tangent to $\G$ and is constant thanks to the hyperbolicity of the leaves on $\mf H$ \cite{rousseautouzet15}. This concludes the proof.
\end{proof}

It seems interesting to relate the above Theorem \ref{degen} to the theory of \emph{special} manifolds as introduced by Campana (see \cite{Ca04} for definitions of special manifolds and conjectures around). 

Campana has conjectured that special manifolds correspond to projective varieties admitting a Zariski dense entire curve. In particular, Theorem \ref{degen} suggests the following question.

\begin{que}
Let $X$ be a projective manifold and $\F$ a transversely hyperbolic (singular) foliation of codimension $1$ on $X$. Prove that $X$ is not special.
\end{que}

The above results also suggest to characterize special manifolds in terms of exceptional locus as Lang's conjectures for general type varieties \cite{Lan86}. 

Let $\Exc(X) \subset X$ denote the Zariski closure of the union of the images of all non-constant holomorphic maps $\C \to X$.

\begin{conj}[Lang]
Let $X$ be a complex projective manifold. Then $X$ is of general type if and only if $\Exc(X) \neq X$.
\end{conj}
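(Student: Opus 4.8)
The statement is Lang's conjecture, and I would separate it into its two implications. The hard one is that $X$ of general type forces $\Exc(X)\neq X$ --- this is the Green--Griffiths--Lang conjecture --- while the reverse implication, that $\Exc(X)\neq X$ forces $X$ to be of general type, is best attacked in contrapositive form. Neither is within reach of current methods, so what follows is a strategy together with an identification of the genuine obstructions, which is precisely why the assertion is recorded as a conjecture.

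For the direction ``general type $\Rightarrow \Exc(X)\neq X$'' the plan is to run the jet-differential machinery on the Demailly--Semple tower of $k$-jets over $X$. First I would seek, for $k$ large and weight $m\gg 0$, global invariant jet differentials vanishing on an ample divisor, i.e. nonzero sections of $E_{k,m}T_X^*\otimes A^{-1}$ with $A$ ample; bigness of $K_X$ is what one hopes to convert, via Demailly's holomorphic Morse inequalities, into such sections. The second ingredient is the fundamental vanishing theorem: any entire curve $f\colon\C\to X$ has its $k$-jet lift contained in the zero locus of every such differential, a consequence of a Schwarz-type lemma together with the Nevanlinna logarithmic-derivative lemma, in the dynamical reformulation of McQuillan. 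Intersecting these zero loci over sufficiently many independent jet differentials would confine the union of all $f(\C)$ to a proper algebraic subvariety, which one then identifies with $\Exc(X)$.

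For the converse I would assume $\kappa(X)<\dim X$ and try to show that the images of entire curves cover a Zariski-dense subset, so that $\Exc(X)=X$. Here the plan is structural: pass to the Iitaka fibration, and more finely to Campana's core map, reducing to the two building blocks of fibres of Kodaira dimension $0$ (abelian and Calabi--Yau type, where dense entire curves are expected, and known in the abelian case via one-parameter subgroups) and of rationally connected type (which are swept out by entire curves). One then spreads these fibrewise curves over the base and checks that their union stays dense; note that this step only requires density of the \emph{union} of images, which is weaker than the existence of a single Zariski-dense entire curve and so sidesteps the full strength of Campana's dichotomy.

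The principal obstacle is the forward (Green--Griffiths--Lang) implication beyond the known cases: for surfaces it is a theorem of McQuillan and of Demailly--El Goul under $c_1^2>c_2$, for subvarieties of abelian varieties it follows from Bloch--Ochiai--Kawamata, and for generic hypersurfaces of high degree from the work of Siu and of Brotbek--Deng, but for a general manifold of general type in dimension $\geq 3$ there is no construction producing enough jet differentials, and even granting their existence the base-locus analysis required to pin down $\Exc(X)$ is unresolved. The converse, in turn, is entangled with the abundance conjecture and with Campana's characterization of special manifolds by density of entire curves, so it too remains out of reach.
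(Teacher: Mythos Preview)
The paper does not prove this statement: it is recorded there as a conjecture (Lang's conjecture) with no accompanying proof, serving only as motivation for the authors' own conjecture on special manifolds. Your write-up correctly recognises this, explicitly flags that neither implication is within reach of current methods, and offers a survey of plausible strategies together with the known partial results and the genuine obstructions; that is an appropriate response to being asked to ``prove'' an open problem.

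Since there is no proof in the paper to compare against, there is no discrepancy of approach to discuss. Your outline of the forward direction via jet differentials and the fundamental vanishing theorem, and of the converse via Iitaka/core fibrations and the expected behaviour of entire curves on the fibres, is a fair summary of how the field thinks about these questions. One small sharpening: for the converse direction as stated here you only need that the \emph{union} of images of entire curves is Zariski-dense whenever $X$ is not of general type, and in the uniruled case this is actually known (rational curves are entire curves), so the hard residual case is really $\kappa(X)=0$ up to abundance-type issues; you allude to this but it could be said more directly.
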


Let $X$ be a projective manifold and consider $X_1:=\Pj(T_X)$ the projectivized tangent bundle. All entire curves $f: \C \to X$ can be lifted as entire curves $f_{[1]}: \C \to X_1$. Now, we define an exceptional locus in $X_1$ as: $\Exc_1(X) \subset X_1$ is the Zariski closure of the union of all the images of lifted entire curves $f_{[1]}(\C)$. 

We propose the following conjecture which generalizes Lang's conjecture to the setting of special manifolds.
\begin{conj}
Let $X$ be a projective manifold. Then $X$ is not special if and only if $\Exc_1(X) \neq X_1$.
\end{conj}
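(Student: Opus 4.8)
The plan is to deduce this conjecture from two separate circles of (currently open) ideas, reducing everything to the interplay between Campana's theory of special manifolds and a jet-level refinement of Lang's conjecture. The natural tool is the Campana core map $c\colon X \rato (Z,\Delta)$, whose orbifold base is of general type precisely when $X$ is not special. I would treat the two implications separately, using $c$ to transport the problem to the base $(Z,\Delta)$, and I would phrase all degeneracy statements on the projectivized tangent bundle $X_1=\Pj(T_X)$, so that it is the \emph{tangent directions} of entire curves, and not merely their images, that get controlled. It is worth stressing at the outset that the statement is a direct analogue of Lang's conjecture, recovering the same philosophy in the extreme case where $X$ is of general type and $c=\id$, while at the opposite extreme (when $c$ is trivial) it encodes Campana's conjecture on entire curves on special manifolds; hence any honest proof must absorb both, and the realistic aim is to isolate exactly which inputs are needed.

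For the implication ``$X$ not special $\Rightarrow \Exc_1(X)\neq X_1$'', I would argue that any entire curve $f\colon\C\to X$ has image $c\circ f$ algebraically degenerate in $(Z,\Delta)$ by the (conjectural) orbifold Green--Griffiths--Lang statement, applicable because $(Z,\Delta)$ is of general type. The purpose of lifting to $X_1$ is that this forces a constraint not just on $f(\C)$ but on its tangent directions: the lift $f_{[1]}$ must avoid those directions projecting isomorphically onto generic tangent directions of $Z$, so that $\Exc_1(X)$ is contained in the union of the relative directions $\Pj(T_{X/Z})$ over the generic fibre together with the preimage of a proper subvariety $\Exc_1(Z)\subsetneq \Pj(T_Z)$. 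Since $\dim Z>0$, this union is a proper subvariety of $X_1$, yielding $\Exc_1(X)\neq X_1$. This is exactly the mechanism already visible in the present paper: the tangency of entire curves to a transversely hyperbolic foliation $\F$ (the tangency lemma above) is a \emph{directional} statement that confines $f_{[1]}$ to $\Pj(T_\F)\subsetneq X_1$, which is the fully rigorous prototype of the phenomenon predicted here.

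For the converse ``$X$ special $\Rightarrow \Exc_1(X)=X_1$'', I would invoke Campana's conjecture that special manifolds carry a Zariski-dense entire curve, and then try to upgrade density in $X$ to density of tangent directions in $X_1$. Concretely one needs a \emph{family} of entire curves whose lifts sweep out $X_1$; a plausible route is to deform a single dense entire curve, or to exploit the orbifold-rationally-connected flavour of special manifolds to produce entire curves through a general point in a general direction, so that the lifts fill a Zariski-dense subset of $X_1$.

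The decisive obstacle is that both directions rest on conjectures open in essentially all nontrivial cases: the orbifold Green--Griffiths--Lang conjecture for the forward direction, and Campana's density conjecture together with its directional strengthening for the converse. Even granting these, the genuinely delicate step is the passage from image-degeneracy in $X$ (or $Z$) to the finer statement $\Exc_1\neq X_1$ in the projectivized tangent bundle: one must rule out that the lifts $f_{[1]}$, though degenerate as curves in $X$, nevertheless accumulate over \emph{all} tangent directions. Controlling this jet-level behaviour — for which the transversely hyperbolic case underlying Theorem \ref{degen} is the only instance where the argument can presently be made unconditional — is where the real work would lie.
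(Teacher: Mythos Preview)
The statement you are attempting to prove is presented in the paper as a \emph{conjecture}, not a theorem: the authors explicitly write ``We propose the following conjecture which generalizes Lang's conjecture to the setting of special manifolds'' and offer no proof. There is therefore no argument in the paper to compare your proposal against.

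Your write-up is in fact consistent with this status: you yourself identify that both implications rest on major open problems (the orbifold Green--Griffiths--Lang conjecture for one direction, Campana's density conjecture plus a directional strengthening for the other), and you describe the result as something that ``must absorb both''. So what you have produced is not a proof but a heuristic reduction of one conjecture to a package of other conjectures of comparable depth. That is a reasonable way to motivate the statement, and it matches the spirit in which the authors introduce it, but it should not be labelled a proof proposal. If you want to contribute something unconditional here, the only case where your mechanism can currently be made rigorous is precisely the transversely hyperbolic situation already handled in the paper (where tangency to $\F$ forces $f_{[1]}(\C)\subset \Pj(T_\F)\subsetneq X_1$); beyond that, the conjecture remains open.
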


This suggests the following question.

\begin{que}
Let $X$ be a projective manifold and $\F$ a holomorphic foliation on $X$ such that all entire curves are tangent to $\F$. Is it true that all entire curves in $X$ are algebraically degenerate and that $X$ is not special ?
\end{que}

More generally, one may consider inductively jets spaces $\pi_{0,k}: X_k \to X $ (see \cite{De97}) and the corresponding exceptional loci $\Exc_k(X)$ obtained as  the Zariski closure of the union of all the images of lifted entire curves $f_{[k]}(\C) \subset X_k$. Then we ask the following question.

\begin{que}
Let $X$ be a projective manifold. Suppose there is an integer $k\geq 0$ such that $\Exc_k(X) \neq X_k$. Is it true that all entire curves in $X$ are algebraically degenerate and that $X$ is not special ?
\end{que}

This question is also motivated by recent results of Demailly \cite{De11} and Campana-P\u{a}un \cite{CaPa15} which imply the following weaker statement: $X$ is of general type if and only if there is an integer $k\geq 1$ such that $\Bs(\mathcal{O}_{X_k}(m)\otimes \pi_{0,k}*A^{-1}) \neq X_k$, where $A$ is an ample line bundle on $X$. The relationship with the previous question is clear with the now classical fact that $\Exc_k(X) \subset \Bs(\mathcal{O}_{X_k}(m)\otimes \pi_{0,k}*A^{-1})$ (see \cite{De97}).

\subsection{A transversely hyperbolic foliation with infinite transverse action}
In this section, we will see that if the K\"ahler assumption is dropped, one can construct transversely hyperbolic foliations with non finite transverse action and Zariski dense entire curves.

More precisely, let us consider Inoue surfaces \cite{Ino} which are quotients of $\Hj \times \C$, where $\Hj$ is the upper half-plane, by certain infinite discrete subgroups. They are equipped with a natural transversely hyperbolic foliation.There are three type of Inoue surfaces distinguished by the type of their fundamental group: $S_M$, $S^{(+)}$ and $S^{(-)}$.

Let us describe the Inoue surfaces of type $S_M$. Let $M=(m_{i,j}) \in \SL(3, \Z)$ be a unimodular matrix with eigen-values $\alpha, \beta, \overline{\beta}$ such that $\alpha>1$ and $\beta \neq \overline{\beta}$. We choose a real eigen-vector $(a_1, a_2, a_3)$ and an eigen-vector $(b_1, b_2, b_3)$ of $M$ corresponding to $\alpha$ and $\beta$. Let $G_M$ be the group of analytic automorphisms of $\Hj \times \C$ generated by
\begin{itemize}
\item $g_0: (w,z) \to (\alpha w, \beta z)$
\item $g_i: (w,z) \to (w+a_i, z+b_i)$
for $i=1,2,3.$
\end{itemize}
$S_M$ is defined to be the quotient surface $\Hj \times \C/G_M$.

Consider the automorphisms of $\Hj \times \C$, $(w,z) \to (n w,n z)$. They induce automorphisms $h_n$ of $S_M$
which have infinite transverse action provided $n \neq \alpha^{l/p}$ for $l, p$ integers. One should also remark that 
in such surfaces all (non-constant) entire curves are tangent to the foliation and are Zariski dense (their topological closure is a real torus of dimension $3$).

Here, the representation $\rho_\F:\pi_1 (S_M)\to \PSL(2,\R)$  associated to the transverse hyperbolic structure takes values in the affine subgroup $\Aff(2,\R)$ of and its linear part $\rho_\F^1:\pi_1 (S_M)\to ({\R}_{>0},\times)$ has non trivial image.

It is worth noticing that this situation cannot occur in the K\" ahler realm. Indeed, suppose that $X$ is a compact K\"ahler manifold carrying a transversely hyperbolic  codimension one foliation which is also transversely affine and such the linear part  $\rho_\F^1:\pi_1 (X)\to ({\R}_{>0},\times)$ has non trivial image. To wit, there exists on $X$ an open cover $(U_i)$ such that for every $i$, $\F$ is defined by $dw_i=0$, where $w_i:U_i\to\Hj$ is submersive on $U_i -\mbox{Sing}\ \F$ and such that the glueing conditions $w_i=\varphi_{ij}\circ w_j$ are defined by locally constant elements $\varphi_{ij}$ of $\Aff(2,\R)$. In particular there exists locally constants cocycles $a_{ij}\in{\R}_{>0}$ such that 

\begin{equation} \label{E:locconstantglueing}
dw_i=a_{ij}dw_j
\end{equation}
 and the normal bundle $N_\F$ is thus numerically trivial. On the other hand, the existence of a transverse hyperbolic structure directly implies that  $N_\F$ is equipped with a metric whose curvature  is a non trivial  semi-negative form. This shows that $c_1(N_\F)\not=0$, a contradiction.

%


\section{The transversely projective case}
\label{sec transv proj}

Throughout this section we let $X$ be a projective  manifold, $\F$ be a transversely projective foliation of codimension $1$ on $X$ and $\Psaut(X,\F)\leq \Psaut(X)$ be the group of pseudo-automorphisms which preserve $\F$. 

Denote by $E\rato X$ any rank two vector bundle such that the given projective structure on $\F$ is defined by a Riccati foliation $\mc R$ on $\Pj(E)$. The foliation $\mc R$ is defined by a (non-unique) flat meromorphic connection $\nabla$ on $E$, which induces a monodromy representation
$$\rho_\nabla \colon \pi_1(X\setminus (\nabla)_\infty) \to \SL_2(\C),$$
where $(\nabla)_\infty$ denotes the divisor of poles of $\nabla$.\\
The monodromy representation of the projective structure is a representation
$$\rho \colon \pi_1(X_0 ) \to \PSL_2(\C),$$
where $X_0:=X\setminus$ and $H\subset (\nabla)_\infty$ denotes the divisor along which the projective structure degenerates. Such representation is induced by $\rho_\nabla$ upon projectivization.

\begin{proof}[Proof of Theorem \ref{thm transv proj}]
By \cite[Theorem D]{MR3522824}, at least one of the following is true:
\begin{enumerate}
\item there exists a generically finite  morphism $\pi \colon X'\to X$ such that $\pi^* \F$ is defined by a closed rational $1$-form;
\item there exists a rational dominant map $\eta \colon X \rato S$ to a ruled surface $\pi\colon S\to C$ and a Riccati foliation $\mc G$ defined on $S$ (i.e. over the curve $C$) such that $\F=\eta^* \mc G$;
\item there exists a polydisk Shimura modular orbifold $\mathfrak H=\mb D^N/\Gamma$ and an algebraic map $\psi\colon X_0 \to  \mathfrak H$ such that the monodromy representation $\rho$ factors through one of the tautological representations of $\pi_1(\mathfrak H)$ (up to a field automorphism of $\C$). Furthermore the singularities of the transverse projective structure are regular.
\end{enumerate}

In the first case, we fall into the first alternative of the statement; the second and the third cases are settled by Proposition \ref{factorization curve} and Proposition \ref{factorization Shimura} respectively whose proofs are given below.
\end{proof}

\subsection{The case of surfaces}

In this section we treat the case of birational symmetries of foliations of surfaces. Most of the key results are contained in \cite{MR1998612}. We want to prove the following:

\begin{prop}
\label{factorization curve}
Suppose that there exist a dominant rational map $\eta \colon X \rato S$ towards a surface $S$ and a foliation $\mc G$ on $S$ such that $\F=\eta^*\mc G$. Then
\begin{itemize}
\item either there exists $\pi \colon X'\to X$, where $\pi$ is  generically finite, such that $\pi^* \F$ is defined by a closed rational $1$-form;
\item or the transverse action of $\Psaut(X,\F)$ is finite.
\end{itemize}
\end{prop}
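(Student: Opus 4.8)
The plan is to transport the problem from $X$ down to the foliated surface $(S,\mc G)$ and then apply the classification of birational symmetries of foliated surfaces of Cantat--Favre \cite{MR1998612}. Here $\mc G$ is transversely projective --- in the situation of interest it is the Riccati foliation over a curve $C$ produced in case (2) of Theorem \ref{thm transv proj} --- and its transverse projective structure pulls back under $\eta$ to the structure on $\F$. In particular the monodromy representation $\rho\colon \pi_1(X_0)\to \PSL_2(\C)$ factors through $\eta$, and its image is the monodromy group $\Gamma\leq \PSL_2(\C)$ of $\mc G$. The first step is to record that the leaf space of $\F$ is identified, through $\dev$, with $\Gamma\backslash\Pj^1$, and that each element $f$ of a finite-index subgroup of $\Psaut(X,\F)$ preserving the transverse structure induces $A_f\in\PSL_2(\C)$ with $\dev\circ\tilde f=A_f\circ\dev$; since $\rho\circ f_*$ and $\rho$ are then conjugate by $A_f$, the element $A_f$ normalizes $\Gamma$, and the transverse action of $f$ is the transformation of $\Gamma\backslash\Pj^1$ induced by $A_f$.

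The second step is to split according to whether $\Gamma$ is elementary (virtually solvable, i.e. fixing a point or a pair of points of $\Pj^1$) or not. If $\Gamma$ is non-elementary, then $A_f$ descending to the identity on $\Gamma\backslash\Pj^1$ forces $A_f\in\Gamma$, so the transverse action is measured by the image of $\{A_f\}$ in $N_{\PSL_2(\C)}(\Gamma)/\Gamma$; the classification of \cite{MR1998612} applied to the Riccati foliation $\mc G$ (whose projective holonomy is non-elementary) guarantees that this symmetry group is finite, which yields the second alternative of the statement.

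The third step treats the elementary case. There $\mc G$ is transversely affine or transversely Euclidean, and by the structure theory of such foliations the transverse structure is, after a finite \'etale cover trivializing the linear part of the monodromy, given by a global closed rational $1$-form. Pulling this form back through $\eta$ and extending the cover to a generically finite morphism $\pi\colon X'\to X$, we find that $\pi^*\F$ is defined by a closed rational $1$-form, which is the first alternative. The subcase where $\mc G$ is a fibration (algebraically integrable) falls here too, as noted after the statement of Theorem \ref{thm transv proj}.

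I expect the main obstacle to be the descent step implicit in the first paragraph: a pseudo-automorphism of $X$ preserving $\F$ need not preserve the fibration $\eta$, whose fibres sweep out the leaves of $\F$, and hence need not induce an honest birational symmetry of $(S,\mc G)$. The way around this is to argue that the relevant fibration --- essentially the map $X\rato C$ underlying $\eta$, which plays the role of a Shafarevich morphism for $\rho$ --- is canonically attached to $\F$ and is therefore preserved by a finite-index subgroup of $\Psaut(X,\F)$; working with $\dev$ and $\rho$ on $X_0$ as above, rather than with birational maps of $S$, lets one extract exactly the normalizer quotient $N_{\PSL_2(\C)}(\Gamma)/\Gamma$ from \cite{MR1998612} and conclude. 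Verifying the finiteness of this quotient in the non-elementary case, and the reduction to a closed rational form in the elementary case, are the two points that genuinely require the input of \cite{MR1998612}.
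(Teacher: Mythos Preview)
Your overall plan---reduce to the surface $(S,\mc G)$ and invoke Cantat--Favre---matches the paper's, but the execution diverges in two ways, and one of them is a genuine gap.

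\textbf{Descent.} The paper does not use a Shafarevich-type argument to make $\eta$ equivariant. Instead, after Stein-factorizing so that $\eta$ has connected fibres, it observes: if some $f\in\Psaut(X,\F)$ fails to send a fibre $Y$ to a fibre, then $\eta(f(Y))$ is a positive-dimensional $\mc G$-invariant subvariety, i.e.\ an algebraic leaf of $\mc G$; this persists for nearby fibres, producing infinitely many $\mc G$-invariant curves, and by Jouanolou/Ghys $\mc G$ (hence $\F$) is algebraically integrable. So either we land in the first alternative immediately, or $\eta$ is $\Psaut(X,\F)$-equivariant and we get an honest homomorphism $\Psaut(X,\F)\to\Bir(S,\mc G)$. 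This is cleaner than arguing with $\dev$ and $A_f$ directly, and in particular does not require the transverse projective structure at all.

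\textbf{The dichotomy.} Here is the gap. Cantat--Favre's results are not organized by whether the projective holonomy $\Gamma$ is elementary or non-elementary, and there is no statement in \cite{MR1998612} asserting that $N_{\PSL_2(\C)}(\Gamma)/\Gamma$ is finite whenever $\Gamma$ is non-elementary (indeed this is false in general for abstract non-elementary subgroups, and the monodromy of a Riccati foliation need not be discrete). What the paper actually does is run through Cantat--Favre's classification of foliated surfaces with infinite $\Bir(S,\mc G)$: first Theorem~1.3 reduces to the case $\Aut(S,\mc G)=\Bir(S,\mc G)$ (the exceptional models already give closed rational forms after a cover); then Proposition~3.9 says $\Aut(S,\mc G)$ either contains an element with unbounded action on $H^{1,1}$ or contains the flow of a vector field; the first case is handled by Theorems~3.1 and~3.5 (abelian surface or elliptic fibration, both closed-rational), and the second by the case analysis of Proposition~3.8 together with the two auxiliary lemmas (a transverse vector field forces a closed rational defining form; abelian monodromy with at worst logarithmic poles forces the same). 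Every branch terminates in the first alternative, so if none applies, $\Bir(S,\mc G)$ has finite transverse action. Your elementary/non-elementary split does not line up with this case analysis, and the appeal to \cite{MR1998612} in your non-elementary branch is not a valid citation as stated.

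Your elementary branch is morally close to the paper's Lemma~\ref{abelian monodromy}, but note that ``virtually solvable'' is not ``abelian'', and one still needs control on the singularities of the structure to extend the locally defined closed form; in the paper this is absorbed into the case-by-case analysis rather than handled as a standalone reduction.
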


We start by the following lemma, which is well-known to specialists.

\begin{lemma}
\label{transverse symmetries}
Let $\G$ be a codimension one foliation on a complex manifold $Y$. Suppose that $\G$ is invariant by the flow of a vector field $v$ on $Y$ which is not everywhere tangent to $\G$; then $\G$ is defined by a closed rational $1$-form.
\end{lemma}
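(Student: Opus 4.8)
The plan is to construct, by hand, an explicit closed meromorphic $1$-form defining $\mathcal{G}$, obtained by normalizing a local defining form of $\mathcal{G}$ against the vector field $v$. The only real idea is the choice of normalization $\eta = \omega/\iota_v\omega$; everything else is a short computation with Cartan's formula.

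First I would recall that, being of codimension one, $\mathcal{G}$ is given on a suitable open cover $(U_i)$ by holomorphic $1$-forms $\omega_i$ satisfying the Frobenius integrability condition $\omega_i\wedge d\omega_i=0$, with $\omega_i=g_{ij}\omega_j$ on overlaps for nowhere-vanishing units $g_{ij}$. Invariance of $\mathcal{G}$ under the flow of $v$ means that the flow sends $\mathcal{G}$ to itself, which translates into $\mathcal{L}_v\omega_i\wedge\omega_i=0$, i.e. $\mathcal{L}_v\omega_i=g_i\omega_i$ locally for some holomorphic function $g_i$. Next I set $f_i:=\iota_v\omega_i=\omega_i(v)$. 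The hypothesis that $v$ is not everywhere tangent to $\mathcal{G}$ is exactly the statement that $f_i$ is not identically zero, so $\eta_i:=\omega_i/f_i$ is a well-defined meromorphic $1$-form. On overlaps $\omega_i=g_{ij}\omega_j$ forces $f_i=g_{ij}f_j$, so the $\eta_i$ agree and glue to a single global meromorphic $1$-form $\eta$ that still defines $\mathcal{G}$ away from its (proper analytic) indeterminacy locus; by construction $\iota_v\eta=1$.

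The heart of the argument is then to show $d\eta=0$. Using $[\mathcal{L}_v,\iota_v]=0$ one computes $\mathcal{L}_v f=\iota_v\mathcal{L}_v\omega=\iota_v(g\omega)=gf$, whence $\mathcal{L}_v\eta=(\mathcal{L}_v\omega)\,f^{-1}-\omega\,(\mathcal{L}_v f)\,f^{-2}=g\eta-g\eta=0$. Cartan's magic formula $\mathcal{L}_v\eta=\iota_v d\eta+d(\iota_v\eta)$ together with $\iota_v\eta=1$ then gives $\iota_v d\eta=0$. On the other hand, since $\eta$ defines $\mathcal{G}$, integrability yields $\eta\wedge d\eta=0$, so locally $d\eta=\alpha\wedge\eta$ for some $1$-form $\alpha$; contracting with $v$ and using $\iota_v d\eta=0$ and $\iota_v\eta=1$ gives $0=(\alpha(v))\eta-\alpha$, i.e. $\alpha=(\alpha(v))\eta$, and therefore $d\eta=(\alpha(v))\,\eta\wedge\eta=0$. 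Thus $\eta$ is the desired closed meromorphic $1$-form.

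Finally, in the algebraic setting of the application (where $Y$, $\mathcal{G}$ and $v$ are all algebraic), $\eta$ is in fact a rational $1$-form, which is the assertion of the lemma. The steps requiring a little care are the bookkeeping across chart transitions and the behaviour of $\eta$ along the zero/pole divisor of $f$ and the singular locus of $\mathcal{G}$; but all of these are proper analytic subsets and $\eta$ is manifestly meromorphic, so no extension issue arises. I do not expect any genuine obstacle here: the computation is local and short, and the main conceptual point is simply that the normalization $\eta=\omega/\iota_v\omega$ simultaneously kills the multiplier (forcing $\mathcal{L}_v\eta=0$) and fixes $\iota_v\eta=1$, which is precisely what makes $\iota_v d\eta=0$ and $\eta\wedge d\eta=0$ combine into $d\eta=0$.
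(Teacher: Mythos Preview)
Your proof is correct and takes essentially the same approach as the paper: both construct the closed form as $\eta=\omega/\iota_v\omega$. The only difference is in verifying $d\eta=0$: the paper works in local flow-box coordinates where $\omega=a(z,w)\,dz$ and the invariance of $\mathcal{G}$ forces $v=b(z)\,\partial/\partial z$, so that $\eta=dz/b(z)$ is visibly closed, whereas you carry out the same verification coordinate-free via Cartan's formula and the integrability condition.
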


\begin{proof}
Let $\omega$ be a rational form on $Y$ defining $\G$; for example, one can take a form with values in $N^*_\F$ which defines $\G$, and divide it by any non-zero meromorphic section of $N^*_\F$.

Let us show that the rational form $\tilde \omega:=\omega/\omega(v)$ is closed. It is enough to check that $d\tilde \omega=0$ at smooth points of $\F$ such that $v$ is locally transverse to $\F$; in a neighborhood of such a point we can find local coordinates $(z,w_1,\ldots ,w_n)=(z,w)$ such that
$$\omega=a(z,w)\, dz, \qquad v=b(z,w) \frac{\partial}{\partial z}.$$
The condition that the flow of $v$ preserves $\F$ means that $b(z,w)=b(z)$ does not actually depend on $w$. Therefore $\tilde w =  dz/b(z)$ is closed, which concludes the proof.
\end{proof}

The proof of the following lemma is essentially contained in \cite{MR3294560}, but we prove it again for the convenience of the reader.

\begin{lemma}
\label{abelian monodromy}
Let $\F$ be a transversely projective foliation with abelian monodromy and at worst logarithmic singularities. Then $\F$ is defined by a closed rational $1$-form.
\end{lemma}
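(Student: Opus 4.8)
The plan is to reduce the whole statement to a single construction: produce a nonzero rational $1$-form on $\Pj^1$ that is invariant under the monodromy group, and then pull it back through the developing map. Write $G=\rho(\pi_1(X_0))\leq \PSL_2(\C)$ for the (abelian) image of the monodromy representation. Since the developing map and the representation $\rho$ are only defined up to post-composition with a fixed element of $\PSL_2(\C)$, I am free to conjugate $G$, so the first step is to put it into a normal form. Here I would use the standard analysis of abelian subgroups of $\PSL_2(\C)$: two commuting nontrivial M\"obius transformations either share the same fixed-point set on $\Pj^1$ or are both involutions generating a Klein four-group. Consequently, up to conjugacy, either $G$ fixes a point of $\Pj^1$ — in which case, being abelian, it consists of translations $z\mapsto z+b$ (when every element is parabolic or trivial) or of homotheties $z\mapsto \lambda z$ (as soon as it contains one element with two fixed points, which are then fixed by all of $G$) — or else $G$ is the Klein four-group, the only abelian subgroup fixing no point of $\Pj^1$.

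In each of these three cases there is an explicit nonzero rational $1$-form $\omega$ on $\Pj^1$ which is \emph{strictly} invariant under $G$, i.e. $g^*\omega=\omega$ for all $g\in G$: one takes $\omega=dz$, $\omega=\tfrac{dz}{z}$ and $\omega=\tfrac{z\,dz}{z^4-1}$ respectively. In the last case $z^4-1$ vanishes exactly at the four fixed points of the two off-diagonal involutions of the Klein four-group, and one checks directly that $\omega$ is preserved by $z\mapsto -z$ and by $z\mapsto 1/z$. As $\omega$ lives on a curve it is automatically closed. I would then set $\Omega:=\dev^*\omega$ on $\widetilde X_0$: the strict invariance of $\omega$ together with the equivariance $\dev\circ\gamma=\rho(\gamma)\circ\dev$ gives $\gamma^*\Omega=\dev^*(\rho(\gamma)^*\omega)=\Omega$ for every deck transformation $\gamma$, so $\Omega$ descends to a single-valued closed rational $1$-form on $X_0$. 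Since $\dev$ is submersive onto $\Pj^1$ and $\omega\not\equiv 0$, the form $\Omega$ is nonzero, and being pointwise proportional to $d(\dev)$ it annihilates exactly $T\F$; hence $\Omega$ defines $\F$ on $X_0$.

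It remains to extend $\Omega$ across the degeneracy locus $H$, and this is where the hypothesis on the singularities enters and where I expect the only real difficulty to lie. By assumption the transverse projective structure has at worst logarithmic singularities along $H$, so the developing map has moderate (logarithmic) growth there; consequently the descended form $\Omega$, a priori defined only on $X_0=X\setminus H$, has at worst logarithmic poles along $H$ and therefore extends to a meromorphic — hence, by projectivity of $X$, rational — $1$-form on all of $X$. Closedness and the property of defining $\F$ are inherited from the dense open set $X_0$, which produces the desired closed rational $1$-form and completes the proof. The main technical point is the bookkeeping of poles along $H$, in particular controlling that no worse-than-logarithmic behaviour appears after pulling $\omega$ back at points where the developing map meets the zeros or poles of $\omega$; the exceptional Klein four-group case, which is the only abelian monodromy that does not reduce to a transversely affine structure, is absorbed uniformly by the explicit invariant form above rather than by passing to a finite \'etale cover.
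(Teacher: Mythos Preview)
Your argument follows essentially the same route as the paper: put the abelian monodromy group in normal form, write down an explicit rational $1$-form on $\Pj^1$ invariant under that group ($dz$ in the additive case, $dz/z$ in the multiplicative case), pull it back through the developing map to obtain a closed form defining $\F$ on $X_0$, and invoke the logarithmic/regular singularity hypothesis to extend meromorphically across $H$ (with GAGA to pass from meromorphic to rational). The paper's proof is exactly this, only phrased in terms of gluing the local $dF_i$ or $dF_i/F_i$ rather than pulling back from the universal cover.

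Your treatment is in fact slightly more careful than the paper's on one point: the paper asserts that every abelian subgroup of $\PSL_2(\C)$ is conjugate into $(\C,+)$ or $(\C^*,\times)$, which overlooks the Klein four-group. You handle that case directly with the explicit invariant form $z\,dz/(z^4-1)$, which is a clean way to avoid passing to a cover. One small overstatement: your claim that the extended $\Omega$ has \emph{at worst logarithmic} poles along $H$ is stronger than what you need and not obviously true in every local model (the pole order can depend on the exponent of the local monodromy and on which of the three forms $\omega$ you are pulling back); all that is required, and all the paper claims, is that the regular-singularity hypothesis forces moderate growth and hence a \emph{meromorphic} extension. With that adjustment your proof is complete.
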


\begin{proof}
Remark that abelian subgroups of $\PSL_2(\C)$ are conjugated to subgroups of $(\C,+)$ or of $(\C^*,\times)$. Therefore, we may assume that the monodromy is either additive or multiplicative.

If the monodromy is additive, then the local distinguished first integrals $F_i$ can be chosen so that the local meromorphic forms $dF_i$ glue to a closed rational form which is defined outside the singularities of the structure and which defines $\F$.\\
Similarly, if the monodromy is multiplicative, then the $F_i$-s can be chosen so that the local meromorphic forms $dF_i/F_i$ glue to a closed rational form which is defined outside the singularities of the structure and which defines $\F$.

The assumption on the singularities ensures that such forms can be extended meromorphically through them. By GAGA, the meromorphic forms obtained in this way are rational.
\end{proof}

We are ready to prove Proposition \ref{factorization curve}.

\begin{proof}[Proof of Proposition \ref{factorization curve}]
First, remark that we can replace $S$ by the Stein factorization $\pi\colon S' \to S$ of $\eta$ and $\mc G$ by $\pi^*\mc G$. Therefore, we may suppose that the fibres of $\eta$ are connected.

Let us prove that
\begin{itemize}
\item either the action of $\Psaut(X,\F)$ on $X$ preserves $\eta$ 
\item or $\F$ is algebraically integrable (and in particular it is defined by a closed rational form).
\end{itemize}
 Let us fix $f\in \Psaut(X,\F)$ and suppose that for a fibre $Y$ of $\eta$ we have $\eta(f(Y))\neq \{\text{pt.}\}$. Since $Y$ is $\F$-invariant and $f$ preserves the foliation $\F$, $f(Y)$ is also $\F$-invariant, thus $\eta(f(Y))$ is $\mc G$-invariant; in particular, since $\eta(f(Y))$ has dimension $\geq 1$, this means that $\eta(f(Y))$ is a $\G$-invariant algebraic curve. \\
Remark that, if $\eta(f(Y))\neq \{\text{pt.}\}$, then the same is true for nearby fibres. However, by \cite{MR531151,MR1753461} if there is an infinite number of $\G$-invariant curves then $\G$ is algebraically integrable, therefore so is $\F$. This shows the alternative.

From now on, we suppose that the action of $\Psaut(X,\F)$ preserves $\eta$, meaning that $\eta$ induces a group homomorphism
$$\phi\colon \Psaut(X,\F) \rato \Bir(S,\G).$$
In order to show that the transverse action of $\Psaut(X,\F)$ on $\F$ is finite, it is enough to show that the transverse action of $\Bir(S,\G)$ on $\G$ is finite. Suppose that this is not the case, so that in particular $\Bir(S,\G)$ is infinite.

By \cite[Theorem 1.3]{MR1998612} if for every birational model $(S',\G')$ of $(S,\G)$ we have $\Aut(S',\G')\subsetneq \Bir(S',\G')$, then either $\G$ is algebraically integrable or $(S,\G)$ is birationally equivalent to one of the two situations in Example 1.3 of \emph{loc.cit.}: after possibly pulling back by a generically finite  morphism, $S=\Pj^1\times \Pj^1$ and $\G$ is defined by a form written as $\alpha y\, dx + \beta x \, dy$, whose multiple
$$\alpha \frac{dx}x + \beta \frac{dy}y$$
is a closed rational form defining $\G$.\\
Therefore, we may assume that $\Aut(S,\G)=\Bir(S,\G)$ is an infinite group. 

By \cite[Proposition 3.9]{MR1998612} at least one of the following is verified:
\begin{enumerate}
\item $\Aut(S,\G)$ contains an element of infinite order $f$ whose action on $H^{1,1}(S,\mb R)$ satisfies 
$$||(f^n)^*|| \to +\infty \qquad \text{as }n\to +\infty;$$
\item $\Aut(S,\G)$ contains the flow of a vector field $v$ on $S$
\end{enumerate}

In the first case, by  \cite[Theorem 3.1, Theorem 3.5]{MR1998612} there exists a generically finite  morphism $\nu \colon S' \to S$ such that $\nu^* \G$ is either a linear foliation on an abelian surface or an elliptic fibration; in both cases, $\nu^* \G$ is defined by a closed rational $1$-form, therefore so is $\F$ after pull-back by a generically finite  morphism $\pi\colon X'\to X$ induced by $\nu$.

In the second case, by \cite[Proposition 3.8]{MR1998612} we are in one of the following situations:
\begin{itemize}
\item $v$ is tangent to an elliptic fibration and $\G$ is either a turbolent foliation (so that we can apply Lemma \ref{transverse symmetries}) or the fibration itself. In both cases, $\G$ is defined by a closed rational $1$-form.
\item $\G$ is a linear foliation on a torus, thus it is defined by a closed regular $1$-form.
\item $v$ is tangent to a fibration in rational curves and $\G$ is either a Riccati foliation (so that we can apply Lemma \ref{transverse symmetries}) or the fibration itself. In both cases, $\G$ is defined by a closed rational $1$-form.
\item $S$ is a $\Pj^1$-bundle over an elliptic curve $E$, $v$ projects onto a vector field on $E$ and $\G$ is either obtained by suspension or it coincides with the $\Pj^1$-bundle (so that in particular it is algebraically integrable). In the first case $\G$ is smooth and the $\Pj^1$-bundle induces a transverse projective structure without poles, whose monodromy factors through $E$; in particular the monodromy is abelian, which implies that $\G$ is defined by a closed rational form by Lemma \ref{abelian monodromy}. Therefore, in both cases $\G$ is defined by a closed rational form.
\item Up to a change of birational model, $\G$ is a linear foliation on $\Pj^1_x\times \Pj^1_y$, which means that it is defined by a closed rational form of type
$$\alpha \frac{dx}x + \beta \frac{dy}y.$$
\end{itemize}
This proves that if a foliation on a surface is preserved by a holomorphic vector field, then it is defined by a closed rational $1$-form, which concludes the proof.
\end{proof}

\subsection{Factorization through a Shimura modular orbifold}


Throughout this section, suppose that there exists an algebraic quotient of the polydisk $\mf H=\mb D^N/\Gamma$ and an algebraic map 
$$\psi\colon X_0 \to \mb \mf H$$
such that the monodromy representation $\rho$ factors through one of the tautological representations of $\pi_1^{orb}(\mf H)$ (up to a field automorphism of $\C$).

Our goal is to prove the following:

\begin{prop}
\label{factorization Shimura}
Under the assumption above, at least one of the following is verified:
\begin{itemize}
\item either there exists a generically finite  morphism $\pi \colon X'\to X$ such that $\pi^* \F$ is defined by a closed rational $1$-form;
\item or the transverse action of $\Psaut(X,\F)$ is finite.
\end{itemize}
\end{prop}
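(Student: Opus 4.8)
The plan is to follow the same overall strategy used in the transversely hyperbolic case (Theorem \ref{hyperbolic transverse action}), adapting it to the present setting where the factorization goes through a polydisk Shimura modular orbifold. First I would reduce to the case where the lattice $\Gamma$ is torsion-free: by Selberg's lemma there is a torsion-free finite index subgroup $\Gamma'\leq \Gamma$, and replacing $X_0$ by the finite \'etale cover corresponding to $\psi_*^{-1}(\Gamma')\leq \pi_1(X_0)$ eliminates the orbifold points of $\mf H$. To make this reduction legitimate for the group $\Psaut(X,\F)$, I would invoke Corollary \ref{lift psaut structure} (together with Corollary \ref{lift aut}): after possibly passing to a further finite \'etale cover, every element of $\Psaut(X,\F)$ lifts to a pseudo-automorphism of the cover, preserving the pulled-back foliation and its transverse projective structure. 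This requires the uniqueness of the transverse projective structure, which in turn requires the hypothesis that $\F$ does not come from a closed rational $1$-form (Lemma \ref{uniqueness projective structure}); if that hypothesis fails we are already in the first alternative of the statement. After this reduction I may compactify $X_0'$ to a smooth projective $X'$ and work with the pull-back foliation, so that $\mf H=\mb D^N/\Gamma$ is now a genuine (smooth, projective or quasi-projective) manifold and $\psi\colon X_0'\to \mf H$ is a morphism with $\F=\psi^*\G_i$ for a tautological foliation $\G_i$.

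Next I would identify $\psi$ (up to Stein factorization) with the Shafarevich morphism of the monodromy representation $\rho$, in order to make it canonical and hence $\Psaut(X,\F)$-equivariant. Concretely, writing the Stein factorization
\[
X' \overset{\pi}{\to} B \to Z=\im(\psi)\subset \mf H,
\]
I would argue that $\pi$ is $\Psaut(X,\F)$-equivariant exactly as in the hyperbolic proof: if $F$ is a fibre of $\pi$ and $f\in\Psaut(X,\F)$, then $f(F)$ is $\F$-invariant, so $\psi(f(F))\subset \mf H$ is a $\G_i$-invariant algebraic subvariety, and by the hyperbolicity of the tautological foliations on $\mf H$ (the argument of \cite{rousseautouzet15} used already in Theorem \ref{degen}) it must be a point. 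Thus $\Psaut(X,\F)$ descends to pseudo-automorphisms of $B$. Here the regularity of the singularities of the transverse projective structure (the third alternative of \cite[Theorem D]{MR3522824} guarantees it) plays the same role the rationality of the negative part did in the hyperbolic case: it lets the distinguished first integrals, and hence the equivariance, behave well across $H$.

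I would then apply Brunebarbe's theorem \cite{brunebarbe2016strong} (as announced in the introduction): since $Z$ is a positive-dimensional subvariety of a quotient of the polydisk carrying a non-trivial tautological variation of Hodge structure, its cotangent bundle is big, and by \cite{MR3449168} $Z$, hence $B$, is of (log-)general type. Consequently $\Bir(B)$ is finite, so a finite index subgroup $G\leq \Psaut(X,\F)$ fixes each fibre of $\pi$. It then remains to descend from "$G$ fixes each fibre of $\pi$" to "$G$ has finite transverse action on $\F$". When $\dim B=\dim X'$ (i.e. $\pi$ is birational) the foliation $\F$ is itself the pull-back of the foliation $\G$ on $B$ and we are done immediately, as in the hyperbolic case. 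The genuinely new difficulty, and the step I expect to be the main obstacle, is the case where the fibres of $\pi$ are positive-dimensional: restricting to a general fibre $F$, the foliation $\F|_F$ is again transversely projective with \emph{trivial} monodromy on the nose (the monodromy having been entirely factored out through $\psi$), so on $F$ the structure degenerates to one defined by a closed rational or logarithmic $1$-form. Handling the transverse action on these fibres is where I would appeal to \cite{lobiancopadic} — which is precisely why the hypothesis $\kappa(X)\geq 0$ is imposed — to conclude that $G$, after passing to a further finite index subgroup, fixes each leaf of $\F$ inside the fibres as well, giving the finiteness of the transverse action.
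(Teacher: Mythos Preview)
Your proposal has the right overall architecture but contains a genuine error in the middle and a real gap at the end.

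The error: you write that after the Selberg reduction ``$\F=\psi^*\G_i$ for a tautological foliation $\G_i$''. This is false in the transversely projective setting. In the hyperbolic case of Theorem \ref{hyperbolic transverse action} one indeed has $\F=\Psi^*\G_i$, but here the hypothesis is only that the \emph{monodromy representation} $\rho$ factors through a tautological representation of $\pi_1^{orb}(\mf H)$ (up to a field automorphism of $\C$); the foliation itself need not be a pull-back from $\mf H$. Consequently your equivariance argument (``$f(F)$ is $\F$-invariant, so $\psi(f(F))$ is $\G_i$-invariant'') breaks down: fibres of $\pi$ are not $\F$-invariant in general, so there is no reason for $f(F)$ to be $\F$-invariant either. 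The paper instead obtains equivariance via Lemma \ref{Shimura Shafarevich}, which identifies the Stein factorization of $\psi$ with the Shafarevich morphism of $\rho$; since the projective structure is unique (Lemma \ref{uniqueness projective structure}), $\rho$ and hence its Shafarevich morphism are canonically attached to $\F$. You do mention this idea, but then abandon it for the incorrect concrete argument. The same error propagates to your case split: the relevant dichotomy is not ``$\dim B=\dim X$'' versus ``positive-dimensional fibres'', but whether or not the fibres of $\pi$ happen to be $\F$-invariant.

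The gap: in the fibre case, \cite{lobiancopadic} only tells you that each individual element of $G$ has finite transverse order on a fibre, i.e.\ that the image of $\Phi\colon G\to\PSL_2(\C)$ is a \emph{torsion} group. This does not by itself force the image to be finite. The paper bridges this by contradiction: an infinite torsion subgroup of $\PSL_2(\C)$ is virtually conjugate to a group of rotations, and the conjugating element is unique up to $z\mapsto z^{-1}$; this pins down the local first integrals $F_U$ canonically up to inversion, so the logarithmic differentials $dF_U/F_U$ glue to a global closed rational $1$-form defining $\F$ (extending across $H$ by regularity of the singularities), contradicting the standing assumption. This is precisely where the first alternative of the statement re-enters the proof, and it is absent from your sketch.
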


\begin{lemma}
\label{Shimura Shafarevich}
Suppose that $\mb D^N/\Gamma$ is smooth. Then the Stein factorization of $\psi \colon X_0 \to \im(\psi)$ is the Shafarevich morphism of the monodromy representation $\rho$ (in the sense of Definition \ref{dfn Shafarevich}).
\end{lemma}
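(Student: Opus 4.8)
The plan is to verify the two defining properties of the Shafarevich morphism directly. Write $q\colon \D^N\to \mf H$ for the universal covering; the smoothness hypothesis forces $\Gamma$ to act freely, so this is an honest (non-orbifold) cover and $\psi$ lifts to a $\Gamma$-equivariant holomorphic map $\widetilde\psi\colon \widetilde{X_0}\to \D^N$. Let $z_i$ denote the $i$-th coordinate and $\rho_i\colon \Gamma\to \PSL_2(\R)\subset\PSL_2(\C)$ the tautological representation attached to the modular foliation $\G_i$ with $\F=\psi^*\G_i$, so that $\rho=\tau\circ\rho_i\circ\psi_*$ for some field automorphism $\tau$ of $\C$. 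The key object is the bounded holomorphic function $u:=z_i\circ\widetilde\psi\colon \widetilde{X_0}\to\D$, which by equivariance satisfies $u\circ\gamma=\rho_i(\psi_*\gamma)\circ u$. Writing the Stein factorization as $X_0\xrightarrow{s}B_0\xrightarrow{g}\im(\psi)$, the map $g$ is finite and $s$ has connected fibres, so for a connected $Z\subset X_0$ one has $s(Z)=\{pt.\}$ iff $\psi(Z)=\{pt.\}$; moreover $\rho$ factors through $s$ since it factors through $\psi=g\circ s$. Hence it suffices to prove, for every normal connected algebraic $Z\subset X_0$, the equivalence $\rho(\pi_1(Z))=\{e\}\Leftrightarrow \psi(Z)=\{pt.\}$.

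The implication $\Leftarrow$ is immediate: if $\psi(Z)$ is a point then $(\psi|_Z)_*=0$, whence $\rho(\pi_1(Z))=\tau(\rho_i((\psi|_Z)_*\pi_1(Z)))=\{e\}$.

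The content is the reverse implication, which I expect to be the main obstacle. Assume $\rho(\pi_1(Z))=\{e\}$. Since $\tau$ is injective, this gives $\rho_i((\psi|_Z)_*\pi_1(Z))=\{e\}$, so the restriction of $u$ to a connected component $\widetilde Z$ of the preimage of $Z$ in $\widetilde{X_0}$ is invariant under the deck group of $\widetilde Z\to Z$ and therefore descends to a bounded holomorphic function $F_Z\colon Z\to\D$. Let $\bar Z\subset X$ be the Zariski closure of $Z$ and $\mu\colon \hat Z\to \bar Z$ a resolution with $\hat Z$ smooth projective; then $F_Z\circ\mu$ is bounded off the proper analytic subset $\mu\inv(\bar Z\setminus Z)$, hence extends holomorphically across it by the Riemann extension theorem. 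A bounded holomorphic function on the compact connected manifold $\hat Z$ is constant, so $F_Z$ is constant; equivalently $z_i$ is constant along $\widetilde\psi(\widetilde Z)$, i.e. $\psi(Z)$ is contained in a single leaf of $\G_i$. Thus $\overline{\psi(Z)}$ is an irreducible algebraic subvariety of $\mf H$ tangent to, hence invariant by, $\G_i$, and by \cite[Proposition 3.4]{rousseautouzet15} it is reduced to a point. This gives $\psi(Z)=\{pt.\}$ and concludes.

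The delicate points, where I expect to spend the most care, are (i) checking that the descent of $u$ and the Riemann extension go through for the a priori singular, non-compact $Z$ — this is exactly why one passes to the resolution $\hat Z$ of $\bar Z$ — and (ii) the appeal to the absence of positive-dimensional $\G_i$-invariant algebraic subvarieties of $\mf H$. One can in fact bypass (ii): the torsion-freeness of $\Gamma$ together with its irreducibility forces each tautological projection $\rho_i$ to be injective (by Margulis' normal subgroup theorem when $N\geq 2$, and trivially when $N=1$). Then $\rho(\pi_1(Z))=\{e\}$ yields $(\psi|_Z)_*\pi_1(Z)=\{e\}$, so $\psi|_Z$ lifts entirely to $\D^N$, and the same boundedness-plus-compactness argument applied simultaneously to all $N$ coordinates shows that $\widetilde\psi(\widetilde Z)$, and hence $\psi(Z)$, is a single point.
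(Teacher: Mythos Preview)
Your main argument is correct and coincides with the paper's: from the vanishing of the monodromy one obtains a bounded holomorphic map $Z\to\D$, extends it by Riemann's theorem to a smooth projective model to force constancy, and then invokes \cite[Proposition 3.4]{rousseautouzet15}; the only difference is organizational---you run this on $Z\subset X_0$ (compactified inside $X$), whereas the paper passes first to the image $\overline{\psi(Z)}\subset\mf H$ and argues there. Your closing alternative via Margulis' normal subgroup theorem---deducing that each tautological projection $\rho_i$ is injective, so that $\psi|_Z$ itself lifts to $\D^N$---is not in the paper and neatly replaces the geometric input from \cite[Proposition 3.4]{rousseautouzet15} by an arithmetic one.
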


\begin{proof}
Let $\psi_0\colon X_0 \to B$ be the Stein factorization of $\psi\colon X_0\to \im (\psi)$. We already know that the monodromy is trivial along fibres; what is left to prove is that $\psi_0$ is "maximal" among such morphisms, i.e. that, if $i\colon Y\hookrightarrow X_0$ is a normal connected algebraic subvariety such that the image of $\rho\circ i_* \colon \pi_1(Y) \to \PSL_2(\C)$ is finite, then $\psi_0(Y)$ is reduced to a point. 

Suppose by contradiction that $\psi(Y)$ is not reduced to a point and let $Z=\overline{\psi(Y)}\subset \mf H$; by \cite[Proposition 3.4]{rousseautouzet15}, $Z$ is not tangent to any of the $\F_i$.

The leaves of any one of the $\F_i$ are locally given by first integrals with values in $\mb D$; since up to composing with an element of $\Gal(\C/\Q)$ the representation $\rho$ factors through the monodromy of the natural transverse hyperbolic structure on $\F_i$, the latter is finite along $Z$.

Consider the finite \'etale cover $Z'\to Z$ corresponding to the finite index subgroup $\ker(\rho\circ i_*) \subset \pi_1(Y)$. Then the pull-back of $\F_i$ to (the smooth part of) $Z'$ is given by a global first integral $Z'_{sm}\to \mb D$; by the Riemann extension theorem, such first integral extends holomorphically to a smooth projective model of $Z'$, hence it is constant, a contradiction. This shows that the factorization of $\psi$ is indeed the Shafarevich morphism of $\rho$.
%
%
\end{proof}

The following lemma follows from the proof of \cite[Lemma 2.20]{MR2324555}; one can actually see that the morphism $\pi$ in the statement can be taken to have topological degree $\leq 2$.

\begin{lemma}
\label{uniqueness projective structure}
If $\F$ admits more than one transverse projective structure, then there exists $\pi \colon X'\to X$, where $\pi$ is  generically finite, such that $\pi^* \F$ is defined by a closed rational $1$-form.
\end{lemma}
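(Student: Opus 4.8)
The plan is to work with the standard description of a transverse projective structure by a \emph{projective triple} of rational $1$-forms. Concretely, a structure $(E,\nabla,\sigma)$ as in Definition \ref{def projective} is encoded, after a rational trivialization of $E$, by its $\mf{sl}_2(\C)$-valued connection matrix, i.e.\ by a triple $(\omega_0,\omega_1,\omega_2)$ of rational $1$-forms with $\omega_0$ defining $\F$ and satisfying the integrability relations
$$d\omega_0 = \omega_0\wedge\omega_1, \qquad d\omega_1 = \omega_0\wedge\omega_2, \qquad d\omega_2 = \omega_1\wedge\omega_2;$$
the birational equivalence of Definition \ref{def projective} then corresponds to the gauge action of rational maps $X\rato \PSL_2(\C)$. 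Suppose $\F$ carries two non-equivalent such structures. Since both contain a defining form of $\F$, I would first rescale (via a diagonal gauge) so that the two triples share the same $\omega_0$; then from $d\omega_0 = \omega_0\wedge\omega_1 = \omega_0\wedge\omega_1'$ one gets $\omega_1 - \omega_1' = g\,\omega_0$ for a rational function $g$. The residual unipotent gauge transformations fixing $\omega_0$ act on the middle form precisely by $\omega_1 \mapsto \omega_1 + (\text{rational})\cdot\omega_0$, so after such a gauge I may assume the two triples also share $\omega_1$ and differ only in the third form.

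Write the two triples as $(\omega_0,\omega_1,\omega_2)$ and $(\omega_0,\omega_1,\omega_2')$. From $d\omega_1 = \omega_0\wedge\omega_2 = \omega_0\wedge\omega_2'$ I get $\omega_2 - \omega_2' = h\,\omega_0$ for a rational function $h$, and subtracting the two copies of $d\omega_2 = \omega_1\wedge\omega_2$ gives $d(h\omega_0) = h\,\omega_1\wedge\omega_0$; expanding $d(h\omega_0) = dh\wedge\omega_0 + h\,d\omega_0$ and using $d\omega_0 = \omega_0\wedge\omega_1$ this reduces to
$$(dh - 2h\,\omega_1)\wedge\omega_0 = 0.$$
If $h\equiv 0$ the two triples coincide, contradicting inequivalence; hence $h\not\equiv 0$, and dividing by $h$ yields $\tfrac{dh}{h} - 2\omega_1 = k\,\omega_0$ for some rational $k$, i.e.\ $\omega_1 \equiv \tfrac12\,d\log h \pmod{\omega_0}$. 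Feeding this back into $d\omega_0 = \omega_0\wedge\omega_1$ gives $d\omega_0 = \tfrac12\,\omega_0\wedge d\log h$, which is exactly the statement that the (two-valued) form $\theta := \sqrt{h}\,\omega_0$ is closed, since $d(\sqrt h\,\omega_0) = \sqrt h\bigl(\tfrac12\tfrac{dh}{h}\wedge\omega_0 + d\omega_0\bigr) = 0$.

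Finally, $\theta$ still defines $\F$ (multiplication by the nonzero function $\sqrt h$ does not change the foliation), but it is a priori only defined up to sign. I would remove this ambiguity by passing to the double cover $\pi\colon X'\to X$ obtained as the normalization of the cover adjoining $\sqrt h$: on $X'$ the pullback $\pi^*h$ is a square, $\sqrt{\pi^*h}$ is a rational function, and $\pi^*\theta = \sqrt{\pi^*h}\,\pi^*\omega_0$ is an honest closed rational $1$-form defining $\pi^*\F$, with $\deg\pi\le 2$ (and $\deg\pi = 1$ when $h$ is already a square). I expect the main obstacle to lie in the bookkeeping around the equivalence relation on triples rather than in the computation itself: one must justify that the two structures can indeed be represented by triples sharing first $\omega_0$ and then $\omega_1$, that the relevant residual gauge freedom is exactly the unipotent shift above, and --- crucially --- that ``$\F$ admits more than one transverse projective structure'' translates into genuine gauge-inequivalence of the triples, i.e.\ into $h\not\equiv 0$. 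One should also track the polar divisors throughout and invoke GAGA to ensure that the closed meromorphic form produced on $X'$ is in fact rational.
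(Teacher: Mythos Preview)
Your argument is correct and is precisely the classical computation behind the result the paper cites: the paper does not give its own proof but refers to \cite[Lemma 2.20]{MR2324555}, and what you have written is exactly the projective-triple argument carried out there (normalize $\omega_0$, then $\omega_1$, extract $h$ from the difference of $\omega_2$'s, and pass to the double cover adjoining $\sqrt h$). Your observation that $\deg\pi\le 2$ matches the remark in the paper as well, and the caveats you flag about gauge bookkeeping and GAGA are the right ones and are handled in the cited reference.
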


We are ready to prove Proposition \ref{factorization Shimura}.

\begin{proof}[Proof of Proposition \ref{factorization Shimura}]
Suppose that there exists no generically finite  morphism $\pi \colon X'\to X$ such that $\pi^* \F$ is defined by a closed rational $1$-form; by Lemma \ref{uniqueness projective structure} this implies that the transverse projective structure of $\F$ is unique, so that in particular it is preserved by $\Psaut(X,\F)$.

First, we may assume that $\Gamma$ is torsion-free: indeed, by Selberg's lemma there exists a finite index subgroup $\Gamma'\leq \Gamma$ which is torsion-free.\\
Replace $X_0$ by its finite \'etale cover $X_0' \to X_0$ corresponding to the finite index subgroup $\psi_*\inv(\Gamma')\leq \pi_1(X_0)$. By Corollary \ref{lift psaut structure}, up to taking another finite \'etale cover, all elements of $\Psaut(X,\F)$ lift to pseudo-automorphisms of $X_0'$, and we can reason on the pull-back foliation on $X_0'$.\\
Remark that, if $X'$ is a smooth compactification of $X_0'$ and $X'\to X$ denotes a generically finite morphism which restricts to the \'etale cover $X_0'\to X_0$, we have $\kappa(X')\geq \kappa(X)\geq 0$; therefore, it suffices to prove the claim for the pull-back of $\F$ on $X'$.

If we denote by
$$X_0  \overset{\pi}{\to} B \to Z \subset \D^N/\Gamma$$
the Stein factorization of $\psi$, $\pi$ is $\Psaut(X, \F)$-equivariant: indeed, by Lemma \ref{Shimura Shafarevich} it is identified with the Shafarevich morphism of the monodromy $\rho$, and therefore it only depends on the transverse projective structure. By uniqueness, it is canonically associated to $\F$.\\
Furthermore, as we saw in the proof of Corollary \ref{lift psaut structure}, elements of $\Psaut(X,\F)$ restrict to pseudo-automorphisms of $X_0$. This proves that $\Psaut(X,\F)$ acts by pseudo-automorphisms on $B$.

Since $\mb D^N/\Gamma$ is smooth, by \cite{MR3859271} the subvariety $Z:= \psi(X_0)\subset \mb D^N/\Gamma$ has big logarithmic cotangent bundle; therefore by \cite{MR3449168} it is of log-general type (see \cite{MR637060}), hence so is $B$ by pull-back of log-canonical forms.\\
By \cite[Theorem 11.12]{MR637060}, the group of  strictly birational transformations of $B$ is finite, and in particular so is its group of pseudo-automorphisms. Therefore, a finite index subgroup $G\leq \Psaut(X,\F)$ fixes each fibre of $\pi$.

If the fibres of $\pi$ are $\F$-invariant, the proof is finished: $\F$ is the pull-back of a foliation $\mc G$ on $B$, and the finite index subgroup $G\leq \Psaut(X,\F)$ fixes each leaf of $\mc G$, thus each leaf of $\F$.

Suppose now that generic fibres of $\pi$ are not $\F$-invariant. The restriction of $\F$ to such fibres is a codimension $1$ foliation endowed with a natural transverse projective structure, which is preserved by the restriction of the action of $G$ to fibres. \\
Since the monodromy of the structure is trivial in small neighborhoods of fibres of $\pi$ (in $X_0$), in such a neighborhood $U$  one can define a first integral
$$F_U\colon U\rato \Pj^1$$
which defines the transverse structure. Since the action of $G$ preserves such structure, for $g\in G$ one has
$$F_U\circ g = \phi_g \circ F_U \qquad \text{for some }\phi_g\in \PSL_2(\C).$$

Let us show that the action of $G$ on $U$ is transversely finite; this is equivalent to showing that the group morphism
$$\Phi\colon G \to \PSL_2(\C)$$
defined by $g \mapsto \phi_g$ has finite image.\\
First remark that, since the singularities of the structure are regular by \cite{MR3522824}, the first integral $F_U$ extends meromorphically to the closure of $U$ in $X$. In particular, if we denote by $Y$ the Zariski-closure in $X$ of a fibre of $\pi$ contained in $U$, the restriction of $\F$ to $Y$ is given by a meromorphic first integral
$$F_Y\colon Y \rato \Pj^1.$$

By the easy addition formula (see e.g. \cite[\textsection 10]{MR637060}), for a general fibre $F$ of $\pi$ we have
$$\kappa(X)\leq \kappa(F) + \dim(B),$$
which implies that $\kappa(F)\geq 0$. Therefore, by \cite{lobiancopadic}, the transverse action of $G$ on $Y$ (i.e. the action of $G$ on the first integral $F_Y$) is torsion; since such action coincides with the action of $G$ on $\Pj^1$ defined above, this implies that the image of $\Phi$ is torsion.\\
Suppose by contradiction that $\Phi(G)$ has infinite order; torsion subgroups of Lie groups are virtually abelian (see e.g. \cite{MR0393342}), and  abelian torsion subgroups of $\PSL_2(\C)$ are conjugated to rational angle rotation subgroups. Since we supposed the image of $\Phi$ to be infinite, the conjugation which puts a finite index subgroup in the form of rotations is unique up to composition with the involution $i\colon z\mapsto z\inv$; therefore, up to composition with $i$, the first integral $F_U$ can be chosen canonically.\\
As remarked above, the morphism $\Phi$ coincides with the transverse action of $G$ on any fibre in $U$ (endowed with the restricted foliation); in particular, if one chooses another small neighborhood of a fibre $U'$ as above such that $U\cap U'\neq \emptyset$, the transverse action of $G$ will also be infinite. As a consequence, on $U'$ one can also define a canonical (up to involution $i$) first integral
$$F_{U'} \colon U' \rato \Pj^1.$$
Since, on $U\cap U'$, $F_U$ and $F_{U'}$ differ at most by composition with $i$, the local rational forms $dF_U/F_U$ glue to a global rational form (on $X_0$) which defines $\F$; by the regularity of singularities of the structure, such form extends to $H$, and we obtain a contradiction with the assumption at the beginning of the proof. This shows that the transverse action of $G$ on $U$ is finite.

Now, consider the foliation $\G$ obtained by intersecting (local) leaves of $\F$ with fibres of $\pi$. Since the restriction of $\F$ to fibres of $\pi$ is algebraically integrable, $\G$ is also algebraically integrable; let 
$$\eta \colon X \rato B_\G$$
 be a rational fibration defining $\G$.\\
Remark that the action of $G$ preserves $\eta$. The fact that $G$ has transversely finite action on $U$ can be rephrased by saying that some finite index subgroup $G'$ of $G$ acts as the identity on $\eta(U)$. Since birational transformations which act as the identity on a euclidean neighborhood are the identity, this implies that $G'$ fixes all fibres of $\eta$, and in particular all leaves of $\F$. This concludes the proof.
\end{proof}

\section{The case of codimension $1$ foliations on tori}
\label{sec tori}

Conjecture \ref{main conjecture} is only meaningful for manifolds with a rich group of automorphisms; the first example one should study is therefore that of homogeneous (compact K\"ahler) manifolds. By a result of Borel and Remmert (see for example \cite[Theorem 2.5]{MR1440947}), such a manifold can be decomposed in a product $T\times R$, where $T=\C^n/\Lambda$ is a complex torus and $R$ is a rational homogenous manifold (a generalized flag manifold); in particular, in order for a homogeneous manifold $X$ to have non-negative Kodaira dimension, $X$ needs to be a torus.

\subsection{Classification}

Codimension $1$ foliations on complex tori have been classified by Brunella:

\begin{thm}[\cite{MR2674768}]
\label{Foliations tori}
Let $\F$ be a (singular) codimension one foliation on a complex torus $X=\C^n/\Lambda$. Then exactly one of the following holds:
\begin{enumerate}
\item $\F$ is a linear foliation;
\item $\F$ is a turbulent foliation: there exists a linear projection $\pi\colon X\to Y$ onto a complex torus $Y=\C^m/\Lambda'$, $0<m<n$, a closed meromorphic one-form $\eta_0$ on $Y$ and a holomorphic (linear) form $\eta_1$ on $X$, which doesn't vanish on the fibres of $\pi$, such that $\F$ is defined by the meromorphic one-form
$$\eta:= \pi^*\eta_0+\eta_1;$$
\item the normal bundle $N_\F$ is ample.
\end{enumerate}
\end{thm}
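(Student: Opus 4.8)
The plan is to read off the structure of $\F$ from the normal bundle $N_\F$, exploiting the triviality of $\Omega^1_X$ on the torus. Fix the canonical trivialization $\Omega^1_X\cong\mathcal O_X^{\oplus n}$ by the translation-invariant forms $dz_1,\dots,dz_n$. Then $\F$ is defined by a twisted integrable $1$-form $\omega\in H^0(X,\Omega^1_X\otimes N_\F)$, which I write as $\omega=\sum_{i=1}^n s_i\,dz_i$ with $s_i\in H^0(X,N_\F)$; the common zero locus of the $s_i$ is the singular set $\Sing\F$, of codimension $\geq 2$ since $\F$ is taken saturated. As sections of a single line bundle, the $s_i$ define the Gauss map
$$\gamma\colon X\rato\Pj^{n-1},\qquad x\mapsto[s_1(x):\cdots:s_n(x)],$$
whose indeterminacy locus is contained in $\Sing\F$ and which satisfies $N_\F\cong\gamma^*\mathcal O(1)$ away from it. I would then run the entire classification according to the rank of the Hermitian form $H:=c_1(N_\F)$ representing $N_\F$ in $H^{1,1}(X,\R)$: the three cases $\mathrm{rank}\,H=0$, $0<\mathrm{rank}\,H<n$, and $\mathrm{rank}\,H=n$ will correspond exactly to the three alternatives, which disposes of mutual exclusivity at once, the rank being a well-defined invariant.

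First I would show that $N_\F$ is nef, i.e.\ $H\geq 0$. Resolving the indeterminacy of $\gamma$ by a sequence of blow-ups $p\colon\widetilde X\to X$ produces a morphism $\widetilde\gamma\colon\widetilde X\to\Pj^{n-1}$ with $p^*N_\F=\widetilde\gamma^*\mathcal O(1)+E$, where $E$ is effective and $p$-exceptional, hence supported over $\Sing\F$. For any curve $C\subset X$, its strict transform $\widetilde C$ is not contained in $\mathrm{Supp}\,E$, so $N_\F\cdot C=\widetilde\gamma^*\mathcal O(1)\cdot\widetilde C+E\cdot\widetilde C\geq 0$, the first term by nefness of $\widetilde\gamma^*\mathcal O(1)$ and the second by $\widetilde C\not\subset\mathrm{Supp}\,E$. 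Thus $H\geq 0$. If $\mathrm{rank}\,H=0$ then $N_\F$ is numerically trivial; since some $s_i\neq 0$ gives a nonzero section, and a $c_1$-trivial line bundle on a compact complex torus with a nonzero section is $\mathcal O_X$, the section is nowhere vanishing and all $s_i$ are constant. Hence $\omega=\sum a_i\,dz_i$ with $a_i\in\C$, and $\F$ is the linear foliation of case (1).

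Next, suppose $0<\mathrm{rank}\,H<n$. Since $E:=\operatorname{Im}H$ is an integral alternating form on $\Lambda$, its radical $K=\ker H$ is a rational, complex subspace of $\C^n=T_0X$, so $T_0:=K/(K\cap\Lambda)$ is a subtorus and the quotient $\pi\colon X\to Y=\C^m/\Lambda'$ satisfies $H=\pi^*H_Y$ for a positive-definite $H_Y$ on $Y$; equivalently $N_\F\cong\pi^*\mathcal O_Y(P)$ up to a numerically trivial twist, with $P$ ample on $Y$ (taking $Y$ minimal, so $\mathrm{rank}\,H=\dim Y$). It remains to recover the normal form $\eta=\pi^*\eta_0+\eta_1$: restricting $\omega$ to a fibre of $\pi$, on which $N_\F$ is trivial, yields an invariant linear form that globalizes to a holomorphic $\eta_1$ not vanishing on fibres, while the transverse part descends to a meromorphic $1$-form $\eta_0$ on $Y$ with polar divisor $P$. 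The integrability $\omega\wedge d\omega=0$, together with $d\eta_1=0$, then forces $\eta_0$ to be closed, giving the turbulent foliation of case (2). Finally, when $\mathrm{rank}\,H=n$ the form $H$ is positive definite, so $N_\F$ is ample, which is case (3).

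The analytically delicate step, and the one I expect to be the main obstacle, is the reconstruction of the turbulent normal form in the degenerate case: passing from the cohomological datum ``$c_1(N_\F)$ is semi-positive with kernel a subtorus direction'' to the explicit splitting $\eta=\pi^*\eta_0+\eta_1$. This demands a careful study of the restriction of the twisted form $\omega$ to the fibres of $\pi$, a verification that the fibrewise foliation is the linear one attached to a single invariant form, and the descent of the polar (transverse) part to a genuine \emph{closed} meromorphic form on $Y$, all while checking that $Y$ has been chosen minimal so that $P$ is honestly ample and $\mathrm{rank}\,H=\dim Y$. The remaining inputs—nefness via resolution of $\gamma$, triviality of $c_1$-trivial line bundles with sections on complex tori, and the rationality of $\ker H$—are standard.
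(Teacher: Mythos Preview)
The paper does not prove this theorem; it is quoted from Brunella \cite{MR2674768} and stated without proof. The paragraph immediately following the statement does, however, sketch the governing idea---the \emph{normal reduction} of $\F$: a linear projection $\pi\colon X\to Y$ onto a torus of dimension $m$ with $N_\F=\pi^*L$ for some ample $L$ on $Y$, the three alternatives corresponding to $m=0$, $0<m<n$, $m=n$. Your proposal is precisely a fleshed-out version of this outline (your rank trichotomy for $H=c_1(N_\F)$ is the same thing), so there is no genuine comparison to make: you are reconstructing Brunella's argument, which the paper only cites.

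Your sketch is sound and you have correctly located the one non-formal step: in the intermediate-rank case, going from the cohomological statement $N_\F=\pi^*L$ to the explicit normal form $\eta=\pi^*\eta_0+\eta_1$ with $\eta_0$ closed meromorphic is the actual content of Brunella's paper, and your plan (restrict to fibres to isolate $\eta_1$, descend the transverse part, use integrability for closedness) is the right shape but, as you say, not yet a proof. One minor simplification relative to your nefness argument: the paper observes that a generic constant vector field on $X$ projects to a nonzero section of $N_\F$, so $N_\F$ is effective; on a complex torus an effective line bundle is automatically nef (translate the divisor off any given curve), which bypasses the resolution of the Gauss map entirely.
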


Remark that $N_\F$ is automatically effective: indeed, the image of a generic vector field on $X$ through the natural projection $T_X\to N_\F$ yields a non-trivial section. One can then construct the \emph{normal reduction} of $\F$ (see \cite{MR2674768}), i.e. a linear projection
$$\pi\colon X\to Y$$
onto a complex torus $Y=\C^m/\Lambda'$ such that $N_\F=\pi^*L$ for some \emph{ample} line bundle $L$ on $Y$. Case $(1)$ and $(3)$ in Theorem \ref{Foliations tori} correspond to the extremal cases $m=0$ and $m=n$ respectively.

Smooth foliations, which had been previously classified by Ghys \cite{MR1440947}, arise exactly in the cases $m=0$ and $m=1$.

\subsection{Symmetries}

Recall that any meromorphic map $X \rato T$ from a compact complex manifold towards a compact complex torus is actually holomorphic (see \cite[Lemma 3.3]{MR0481142}); therefore, when studying Conjecture \ref{main conjecture} on complex tori one can simply study automorphisms preserving the foliation.

\begin{prop}
\label{tori symmetries}
Let $\F$ be a (singular) codimension one foliation on a complex torus $X=\C^n/\Lambda$ and let $G=\Aut(X,\F)$ be the group of symmetries of $\F$.
\begin{enumerate}
\item If $\F$ is a linear foliation, then $G$ contains the group of translations of $X$, which has transversely infinite action on $\F$.
\item If $\F$ is a turbulent foliation, then with the notation of Theorem \ref{Foliations tori} $G$ preserves $\pi$ and its action on $Y$ is finite; $G$ contains the subgroup of translations of $X$ preserving $\pi$, which has infinite transverse action on $\F$.
\item If the normal bundle $N_\F$ is ample, then $G$ is a finite group.
\end{enumerate}
\end{prop}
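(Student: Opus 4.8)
The plan is to treat the three cases of Brunella's classification (Theorem \ref{Foliations tori}) separately, exploiting in each case the rigidity coming from the linear-algebraic structure of automorphisms of a complex torus. Recall that $\Aut(X)$ for $X=\C^n/\Lambda$ is an extension of a discrete group (the automorphisms fixing the origin, which correspond to $\Lambda$-preserving linear maps of $\C^n$) by the connected group of translations $X$ itself; this dichotomy between the (large, connected) translation part and the (small, discrete) linear part will drive the whole argument.

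For case (1), when $\F$ is linear, the tangent distribution is translation-invariant by definition, so every translation of $X$ sends $\F$ to itself and hence lies in $G=\Aut(X,\F)$. A translation by a vector transverse to $\F$ moves a generic leaf to a distinct parallel leaf, so no finite-index subgroup of the translation group can fix all leaves; this gives the transversely infinite action. For case (2), the turbulent case, I would first argue that any $f\in G$ must preserve the normal reduction $\pi\colon X\to Y$. The cleanest way is to observe that $\pi$ is intrinsically attached to $\F$ (it is built from the normal bundle $N_\F=\pi^*L$ with $L$ ample on $Y$), so the fibres of $\pi$ are canonically determined by $\F$ and must be permuted by $G$; since $\pi$ is a linear projection of tori, $f$ descends to an automorphism $\bar f$ of $Y$ preserving the induced foliation $\bar\F$ on $Y$ defined by $\eta_0$. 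On $Y$ the form $\eta_0$ is a closed meromorphic one-form with ample normal bundle (the extremal case $m=\dim Y$), so the action of $\Aut(Y,\bar\F)$ on $Y$ is finite by case (3) applied to $Y$; meanwhile the translations of $X$ lying in the kernel of $\pi$ (i.e. translations along the fibres) all preserve $\F$ because $\eta_1$ is a constant linear form that is invariant under translation, giving an infinite subgroup with infinite transverse action as in case (1).

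Case (3), with $N_\F$ ample, is where I expect the real work to be, and it is also the key input reused in case (2). The strategy is to show that $\Aut(X,\F)$ is finite by showing it has no positive-dimensional (hence no nontrivial connected) part: since $\Aut(X)^0$ is exactly the group of translations, it suffices to prove that only finitely many translations preserve $\F$, and that the stabilizer of the origin in $G$ is finite (the latter is automatic since the linear part is discrete). So the heart of the matter is: if $N_\F$ is ample, only finitely many translations $t_a$ satisfy $t_a^*\F=\F$. I would encode $\F$ by its defining section, equivalently by the line bundle $N_\F$ together with the distribution, and use that translation $t_a$ acts on $\Pic(X)$ trivially on $\Pic^0$-classes but that ampleness of $N_\F$ forces the set of $a$ with $t_a^*\F\cong\F$ to be a proper closed (hence finite, being a subgroup) subgroup: concretely, preservation of the foliation forces $a$ to lie in the kernel of the polarization-type map $\phi_{N_\F}\colon X\to \Pic^0(X)$ associated to the ample class $c_1(N_\F)$, and this kernel is finite precisely because $N_\F$ is ample (nondegenerate polarization). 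The main obstacle is making this last implication rigorous: one must verify that invariance of the \emph{foliation} (not merely of the line bundle) translates into membership in a finite subgroup, which requires relating the translate $t_a^*\omega$ of a local/global twisted defining form to $\omega$ and controlling the ambiguity by a nowhere-vanishing factor, then invoking ampleness to rule out a continuum of solutions. Once finiteness of the translation subgroup is established, $G$ is an extension of a discrete (linear) group by a finite group and is itself finite, completing the proof.
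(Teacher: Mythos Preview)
Your argument has a genuine gap in case (3), which case (2) then inherits. You correctly split the problem into controlling the translations in $G$ and the stabiliser of the origin in $G$, and your treatment of the translation part via $\ker(\phi_{N_\F}\colon X\to\Pic^0(X))$ is sound. But the assertion that finiteness of the linear part is ``automatic since the linear part is discrete'' is false: the group of automorphisms of a complex torus fixing the origin is indeed discrete, but it can be \emph{infinite} --- on $X=E\times E$ with $E$ an elliptic curve it contains a copy of $\SL_2(\Z)$. Consequently your concluding sentence ``$G$ is an extension of a discrete (linear) group by a finite group and is itself finite'' is a non sequitur: what you have actually established is only that $G$ has trivial identity component, i.e.\ that $G$ is discrete, not that $G$ is finite.

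The paper closes this gap in one stroke by a route you almost reached: any $g\in G$ preserves the normal bundle $N_\F$, hence $G\subset\Aut(X,N_\F)$, and for an \emph{ample} line bundle $L$ on a complex torus the group $\Aut(X,L)$ is finite --- this is the classical finiteness of the automorphism group of a polarised abelian variety. That single fact disposes of the linear and translation parts simultaneously. Your case (2) is affected in the same way; in addition you should either justify or bypass the claim that the induced $\bar f\in\Aut(Y)$ preserves the foliation defined by $\eta_0$. What \emph{is} immediate is that $\bar f$ preserves the ample line bundle $L$ on $Y$ (because $N_\F=\pi^*L$ is preserved and $\pi$ is the normal reduction), and this already yields finiteness of the action on $Y$ by the same polarisation argument, without ever invoking a foliation on $Y$.
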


Remark that in cases $1$ and $2$ the group $G$ can actually be much bigger (for example, it may contain elements with infinite linear action): this depends on resonance conditions between the lattice defining $X$ (respectively, the fibres of $\pi$) and the holomorphic form defining $\F$ (respectively the form $\eta_1$).

\begin{proof}
Suppose first that $\F$ is a linear foliation. Then clearly $G$ contains the group of translations, and we only need to prove that the latter has transversely infinite action on $\F$. If this weren't the case, then $X$ would be covered by a finite union of leaves of $\F$, which contradicts the fact that leaves have zero Lebesgue measure.

Now consider the case of a turbulent foliation defined by a meromorphic one-form
$$\eta=\pi^*\eta_0 + \eta_1,$$
where the same notation as in Theorem \ref{Foliations tori} is used. Since the projection $\pi$ is canonically associated to the normal bundle $N_\F$, hence to the foliation $\F$, $G$ preserves it.\\
Furthermore, the action of $G$ on the base $Y$ preserves the ample line bundle $L$; by \cite[Application 1, section 6]{MR2514037}, such action is finite.\\
It is clear that any translation of $X$ along fibres of $\pi$ preserves $\eta$, hence $\F$. The same argument as in the case of linear foliations allows to conclude that the action of such translations, hence that of $G$, is transversely infinite.

Finally, suppose that $N_\F$ is ample. The same argument as above allows to conclude that $\Aut(X,N_\F)$ is finite, hence so is $G$.
\end{proof}

\begin{rem}
Proposition \ref{tori symmetries} does not contradict Conjecture \ref{main conjecture}. Indeed, if a foliation $\F$ on an abelian variety admits a transverse invariant metric which is constructed as the curvature of an hermitian metric on a line bundle $L$, then the leaves of $\F$ are the fibres of a linear projection $\pi\colon X\to E$ onto an elliptic curve $E$, which is nothing but the normal reduction of $L$; in particular, the action of $\Aut(X,\F,L)=\Aut(X,L)$ on the space of leaves identifies with the action of $\Aut(E,L_E)$ on $E$, where $L_E$ is an ample line bundle such that $L=\pi^*L_E$. Such action is finite by \cite[Application 1, section 6]{MR2514037}.

Let us prove the above claim. Suppose that $\F$ admits an invariant transverse metric $\theta$ which can be constructed as the curvature form $\Theta_h$ of an hermitian metric on a line bundle $L$ on $X$. In particular, this implies that $L$ is pseudo-effective; by \cite[Lemma 1.1]{MR1646050}, $L$ is numerically equivalent to an effective line bundle $L'$. Let
$$\pi\colon X\to E$$
be the normal reduction of $L'$, and let $A$ be an ample line bundle on $E$ such that $L'=\pi^*A$. Remark that, since $0=[\theta]^2=\pi^* c_1(A)^2$, $E$ is one-dimensional.\\
Now, since $[\theta]=\pi^* c_1(A)$, the integral of $\theta$ along any closed curve contained in a fibre of $\pi$ is equal to $0$. This implies that the fibres of $\pi$ coincide with the leaves of $\F$, which concludes the proof.
\end{rem}

\bibliography{references}{}

\newcommand{\etalchar}[1]{$^{#1}$}
\begin{thebibliography}{CLNL{\etalchar{+}}07}

\bibitem[Bau98]{MR1646050}
Thomas Bauer.
\newblock On the cone of curves of an abelian variety.
\newblock {\em Amer. J. Math.}, 120(5):997--1006, 1998.

\bibitem[Bou04]{MR2050205}
S{\'e}bastien Boucksom.
\newblock Divisorial {Z}ariski decompositions on compact complex manifolds.
\newblock {\em Annales Scientifiques de l'\'Ecole Normale Sup\'erieure.
  Quatri\`eme S\'erie}, 37(1):45--76, 2004.

\bibitem[Bru10]{MR2674768}
Marco Brunella.
\newblock Codimension one foliations on complex tori.
\newblock {\em Ann. Fac. Sci. Toulouse Math. (6)}, 19(2):405--418, 2010.

\bibitem[Bru16]{brunebarbe2016strong}
Yohan Brunebarbe.
\newblock A strong hyperbolicity property of locally symmetric varieties.
\newblock {\em arXiv preprint arXiv:1606.03972}, 2016.

\bibitem[Bru18]{MR3859271}
Yohan Brunebarbe.
\newblock Symmetric differentials and variations of {H}odge structures.
\newblock {\em J. Reine Angew. Math.}, 743:133--161, 2018.

\bibitem[Cam04]{Ca04}
Fr\'{e}d\'{e}ric Campana.
\newblock Orbifolds, special varieties and classification theory.
\newblock {\em Ann. Inst. Fourier (Grenoble)}, 54(3):499--630, 2004.

\bibitem[CF03]{MR1998612}
Serge Cantat and Charles Favre.
\newblock Sym\'{e}tries birationnelles des surfaces feuillet\'{e}es.
\newblock {\em J. Reine Angew. Math.}, 561:199--235, 2003.

\bibitem[CLNL{\etalchar{+}}07]{MR2324555}
Dominique Cerveau, Alcides Lins-Neto, Frank Loray, Jorge~Vit\'{o}rio Pereira,
  and Fr\'{e}d\'{e}ric Touzet.
\newblock Complex codimension one singular foliations and {G}odbillon-{V}ey
  sequences.
\newblock {\em Mosc. Math. J.}, 7(1):21--54, 166, 2007.

\bibitem[CP14]{MR3294560}
Ga\"{e}l Cousin and Jorge~Vit\'{o}rio Pereira.
\newblock Transversely affine foliations on projective manifolds.
\newblock {\em Math. Res. Lett.}, 21(5):985--1014, 2014.

\bibitem[CP15a]{MR3449168}
Fr\'{e}d\'{e}ric Campana and Mihai P\u{a}un.
\newblock Orbifold generic semi-positivity: an application to families of
  canonically polarized manifolds.
\newblock {\em Ann. Inst. Fourier (Grenoble)}, 65(2):835--861, 2015.

\bibitem[CP15b]{CaPa15}
Fr\'{e}d\'{e}ric Campana and Mihai P\u{a}un.
\newblock Orbifold generic semi-positivity: an application to families of
  canonically polarized manifolds.
\newblock {\em Ann. Inst. Fourier (Grenoble)}, 65(2):835--861, 2015.

\bibitem[CS08]{MR2457528}
Kevin Corlette and Carlos Simpson.
\newblock On the classification of rank-two representations of quasiprojective
  fundamental groups.
\newblock {\em Compos. Math.}, 144(5):1271--1331, 2008.

\bibitem[Del70]{MR0417174}
Pierre Deligne.
\newblock {\em \'{E}quations diff\'{e}rentielles \`a points singuliers
  r\'{e}guliers}.
\newblock Lecture Notes in Mathematics, Vol. 163. Springer-Verlag, Berlin-New
  York, 1970.

\bibitem[Dem92]{MR1178721}
Jean-Pierre Demailly.
\newblock Singular {H}ermitian metrics on positive line bundles.
\newblock In {\em Complex algebraic varieties ({B}ayreuth, 1990)}, volume 1507
  of {\em Lecture Notes in Math.}, pages 87--104. Springer, Berlin, 1992.

\bibitem[Dem97]{De97}
Jean-Pierre Demailly.
\newblock Algebraic criteria for {K}obayashi hyperbolic projective varieties
  and jet differentials.
\newblock In {\em Algebraic geometry---{S}anta {C}ruz 1995}, volume~62 of {\em
  Proc. Sympos. Pure Math.}, pages 285--360. Amer. Math. Soc., Providence, RI,
  1997.

\bibitem[Dem11]{De11}
Jean-Pierre Demailly.
\newblock Holomorphic {M}orse inequalities and the {G}reen-{G}riffiths-{L}ang
  conjecture.
\newblock {\em Pure Appl. Math. Q.}, 7(4, Special Issue: In memory of Eckart
  Viehweg):1165--1207, 2011.

\bibitem[Fuj78]{MR0481142}
Akira Fujiki.
\newblock On automorphism groups of compact {K}\"{a}hler manifolds.
\newblock {\em Invent. Math.}, 44(3):225--258, 1978.

\bibitem[Ghy96]{MR1440947}
\'{E}tienne Ghys.
\newblock Feuilletages holomorphes de codimension un sur les espaces
  homog\`enes complexes.
\newblock {\em Ann. Fac. Sci. Toulouse Math. (6)}, 5(3):493--519, 1996.

\bibitem[Ghy00]{MR1753461}
\'Etienne Ghys.
\newblock \`a propos d'un th\'eor\`eme de {J}.-{P}. {J}ouanolou concernant les
  feuilles ferm\'ees des feuilletages holomorphes.
\newblock {\em Rend. Circ. Mat. Palermo (2)}, 49(1):175--180, 2000.

\bibitem[Iit82]{MR637060}
Shigeru Iitaka.
\newblock {\em Algebraic geometry}, volume~76 of {\em Graduate Texts in
  Mathematics}.
\newblock Springer-Verlag, New York-Berlin, 1982.
\newblock An introduction to birational geometry of algebraic varieties,
  North-Holland Mathematical Library, 24.

\bibitem[Ino74]{Ino}
Masahisa Inoue.
\newblock On surfaces of {C}lass {${\rm VII}_{0}$}.
\newblock {\em Invent. Math.}, 24:269--310, 1974.

\bibitem[Jou79]{MR531151}
J.~P. Jouanolou.
\newblock Feuilles compactes des feuilletages alg\'ebriques.
\newblock {\em Math. Ann.}, 241(1):69--72, 1979.

\bibitem[Lan86]{Lan86}
Serge Lang.
\newblock Hyperbolic and {D}iophantine analysis.
\newblock {\em Bull. Amer. Math. Soc. (N.S.)}, 14(2):159--205, 1986.

\bibitem[LB]{lobiancopadic}
Federico Lo~Bianco.
\newblock An application of $p$-adic integration to the dynamics of a
  birational transformation preserving a fibration.
\newblock {\em arXiv preprint arXiv:1707.09534}.

\bibitem[Lee76]{MR0393342}
Dong~Hoon Lee.
\newblock On torsion subgroups of {L}ie groups.
\newblock {\em Proc. Amer. Math. Soc.}, 55(2):424--426, 1976.

\bibitem[LP07]{MR2337401}
Frank Loray and Jorge~Vit\'{o}rio Pereira.
\newblock Transversely projective foliations on surfaces: existence of minimal
  form and prescription of monodromy.
\newblock {\em Internat. J. Math.}, 18(6):723--747, 2007.

\bibitem[LPT16]{MR3522824}
Frank Loray, Jorge~Vit\'{o}rio Pereira, and Fr\'{e}d\'{e}ric Touzet.
\newblock Representations of quasi-projective groups, flat connections and
  transversely projective foliations.
\newblock {\em J. \'{E}c. polytech. Math.}, 3:263--308, 2016.

\bibitem[Mok00]{AIF_2000__50_2_633_0}
Ngaiming Mok.
\newblock Fibrations of compact k\"ahler manifolds in terms of cohomological
  properties of their fundamental groups.
\newblock {\em Annales de l'Institut Fourier}, 50(2):633--675, 2000.

\bibitem[Mum08]{MR2514037}
David Mumford.
\newblock {\em Abelian varieties}, volume~5 of {\em Tata Institute of
  Fundamental Research Studies in Mathematics}.
\newblock Published for the Tata Institute of Fundamental Research, Bombay; by
  Hindustan Book Agency, New Delhi, 2008.
\newblock With appendices by C. P. Ramanujam and Yuri Manin, Corrected reprint
  of the second (1974) edition.

\bibitem[RT18]{rousseautouzet15}
Erwan Rousseau and Fr{\'e}d{\'e}ric Touzet.
\newblock Curves in {H}ilbert modular varieties.
\newblock {\em Asian J. Math.}, 22(4):673--690, 2018.

\bibitem[Tou13]{MR3124741}
Fr\'ed\'eric Touzet.
\newblock Uniformisation de l'espace des feuilles de certains feuilletages de
  codimension un.
\newblock {\em Bull. Braz. Math. Soc. (N.S.)}, 44(3):351--391, 2013.

\bibitem[Tou15]{MR3403731}
Fr\'ed\'eric Touzet.
\newblock Feuilletages holomorphes admettant une mesure transverse invariante.
\newblock {\em Ann. Fac. Sci. Toulouse Math. (6)}, 24(3):523--541, 2015.

\bibitem[Tou16]{MR3644247}
Fr\'ed\'eric Touzet.
\newblock On the structure of codimension 1 foliations with pseudoeffective
  conormal bundle.
\newblock In {\em Foliation theory in algebraic geometry}, Simons Symp., pages
  157--216. Springer, Cham, 2016.

\end{thebibliography}
\bibliographystyle{alpha}

\end{document}